\newtheorem{theorem}{Theorem}[section]
\newtheorem{lemma}[theorem]{Lemma}
\newtheorem{corollary}[theorem]{Corollary}
\title{{\Large \bf  The domination number and the least $Q$-eigenvalue\thanks{Supported by NSFC
(Nos. 11271315, 11171290, 11226290, 11201417).}}}
\author{Guanglong Yu$^a$\thanks{E-mail addresses:
yglong01@163.com, yctusgguo@gmail.com, zhangrongzcx@126.com.}
~ Shu-Guang Guo$^{a}$~ Rong Zhang$^{a}$ ~ Yarong Wu$^{b}$\thanks{Corresponding author: wuyarong1@163.com.} ~
\\ ~ \\
{\footnotesize $^a$Department of Mathematics, Yancheng Teachers
University,}\\ {\footnotesize  Yancheng, 224002, Jiangsu, China}\\
{\footnotesize $^b$SMU college of art and science, Shanghai maritime
University, Shanghai, 200135, China}}
\date{}
\begin{document}
\maketitle

\begin{abstract}
A vertex set $D$ of a graph $G$ is said to be a dominating set if every vertex of $V(G)\setminus D$ is
adjacent to at least a vertex in $D$, and the domination number $\gamma(G)$ ($\gamma$, for short) is the
minimum cardinality of all dominating sets of $G$. For a graph, the least $Q$-eigenvalue is the least
eigenvalue of its signless Laplacian matrix. In this paper, for a nonbipartite graph with both
order $n$ and domination number $\gamma$, we show that
$n\geq 3\gamma-1$, and show that it contains a unicyclic spanning subgraph
with the same domination number $\gamma$.
By investigating the relation between the domination number and the least $Q$-eigenvalue of a graph,
we minimize the least $Q$-eigenvalue among all the nonbipartite graphs with given domination number.

\bigskip
\noindent {\bf AMS Classification:} 05C50

\noindent {\bf Keywords:} Nonbipartite graph; Signless Laplacian; Least eigenvalue; Domination number
\end{abstract}
\baselineskip 18.6pt

\section{Introduction}

\ \ \ \ All graphs considered in this paper are connected, undirected and
simple, i.e. no loops or multiple edges are allowed. We denote by $|S|$ the cardinality of a set $S$,
and denote by $G=G[V(G)$, $E(G)]$ a graph with vertex set
$V(G)$ and edge set $E(G)$, where $|V(G)|= n$ is the order
and $|E(G)|= m$ is the size.
Recall
that $Q(G)= D(G) + A(G)$ is called the $signless$ $Laplacian$ $matrix$ of $G$, where $D(G)= \mathrm{diag}(d_{1}, d_{2},
\ldots, d_{n})$ with $d_{i}= d_{G}(v_{i})$ being the degree of
vertex $v_{i}$ $(1\leq i\leq n)$, and $A(G)$ is the adjacency matrix of $G$. The least eigenvalue of $Q(G)$,
denote by $q_{min}(G)$, is called the $least$ $Q$-$eigenvalue$ of $G$. Noting that $Q(G)$ is positive semi-definite,
we have $q_{min}(G)\geq 0.$

The signless Laplacian matrix has received a lot of attention in recent years, especially after D. Cvetkovi\'{c} et al.
put forward the study of
this matrix in [2-8]. From \cite{D.P.S}, we know that, for a connected
graph $G$, $q_{min}(G)= 0$ if and only if $G$ is bipartite.
Consequently, in \cite{DR}, the least $Q$-eigenvalue was studied as a measure of nonbipartiteness of a graph. One can note
that there
are quite a few results about the least $Q$-eigenvalue. In \cite{CCRS}, Domingos M. Cardoso et al. determined the the graphs
with the the minimum least $Q$-eigenvalue among all the connected nonbipartite
graphs with a prescribed number of vertices. In \cite{LOA}, L. de Lima et al. surveyed some known results about $q_{min}$ and
also proved some
new ones; at the end they stated some open problems. In \cite{FF}, S. Fallat, Y. Fan investigated the relations
between the least $Q$-eigenvalue and some parameters reflecting
the graph bipartiteness. In \cite{WF}, Y. Wang, Y. Fan investigated the least $Q$-eigenvalue
of a graph under some perturbations, and minimized the least eigenvalue of the signless
Laplacian among the class of connected graphs with fixed order
which contains a given nonbipartite graph as an induced subgraph.

Recall that if a vertex $u$ is adjacent to a vertex $v$ in a graph, we say that $u$ $dominates$ $v$ or $v$ $dominates$ $u$.
A vertex set $D$ of a graph $G$ is said to be a $dominating$ $set$ if every vertex of $V(G)\setminus D$ is
adjacent to (dominated by) at least a vertex in $D$, and the $domination$ $number$ $\gamma(G)$ ($\gamma$, for short) is the
minimum cardinality of all dominating sets of $G$. In a graph $G$, we say that a vertex $v$ is $dominated$ by a vertex set
$S$ if $v\in S$ or $v$ is adjacent to a vertex in $S$. A graph $H$ is said to be $dominated$ by a vertex set $S$ if every
vertex of $H$ is dominated by $S$. Clearly, a graph is dominated by its any dominating set.

A connected graph $G$ of order $n$ is called a $unicyclic$ graph if
$|E(G)|=n$. A $unicyclic$ $spanning$ $subgraph$ of
a graph is its a spanning subgraph which is unicyclic.
It is known that for a connected graph $G$ of order $n$, $\gamma\leq \frac{n}{2}$ (see \cite{ORE}).
In this paper, for a nonbipartite graph with both order $n$ and domination number $\gamma$, we show that
$n\geq 3\gamma-1$, and show that it contains a unicyclic spanning subgraph
with the same domination number $\gamma$.

Denote by $C_{k}$ a $k$-cycle (of length $k$). If $k$ is odd, we say $C_{k}$ an $odd$ $cycle$.
For an odd number $s\ge 3$, we let $C_{s,\, l}^*$ be the graph of order $n$ obtained by attaching a
cycle $C_s$ to an end vertex
of a path $P_{l+1}$ and attaching $n-s-l$ pendant edges to the other end vertex of the path $P_{l+1}$ (see Fig. 1.1).
In particular, $l=0$ means attaching $n-s$ pendant edges to a vertex of $C_s$.

\setlength{\unitlength}{0.5pt}
\begin{center}
\begin{picture}(612,149)
\qbezier(17,84)(17,104)(42,119)\qbezier(42,119)(67,133)(104,133)\qbezier(104,133)(140,133)(165,119)
\qbezier(165,119)(191,104)(191,84)\qbezier(191,84)(191,63)(165,48)\qbezier(165,48)(140,34)(104,34)
\qbezier(104,35)(67,35)(42,48)\qbezier(42,48)(16,63)(17,84)
\put(191,81){\circle*{4}}
\put(356,81){\circle*{4}}
\put(376,81){\circle*{4}}
\put(394,81){\circle*{4}}
\put(265,81){\circle*{4}}
\put(577,89){\circle*{4}}
\put(333,81){\circle*{4}}
\qbezier(191,81)(262,81)(333,81)
\put(413,82){\circle*{4}}
\put(493,82){\circle*{4}}
\qbezier(413,82)(453,82)(493,82)
\put(577,74){\circle*{4}}
\put(577,59){\circle*{4}}
\put(577,135){\circle*{4}}
\qbezier(493,82)(535,109)(577,135)
\put(577,107){\circle*{4}}
\qbezier(493,82)(535,95)(577,107)
\put(576,40){\circle*{4}}
\qbezier(493,82)(534,61)(576,40)
\put(180,109){\circle*{4}}
\put(174,55){\circle*{4}}
\put(21,71){\circle*{4}}
\put(21,99){\circle*{4}}
\put(42,50){\circle*{4}}
\put(42,118){\circle*{4}}
\put(195,88){$v_{1}$}
\put(245,91){$v_{s+1}$}
\put(311,91){$v_{s+2}$}
\put(392,92){$v_{s+l-1}$}
\put(465,94){$v_{s+l}$}
\put(179,116){$v_{2}$}
\put(176,42){$v_{s}$}
\put(8,30){$v_{\lceil\frac{s}{2}\rceil+2}$}
\put(-42,65){$v_{\lceil\frac{s}{2}\rceil+1}$}
\put(-22,105){$v_{\lceil\frac{s}{2}\rceil}$}
\put(15,134){$v_{\lfloor\frac{s}{2}\rfloor}$}
\put(582,36){$v_{n}$}
\put(583,140){$v_{s+l+1}$}
\put(583,110){$v_{s+l+2}$}
\put(260,-9){Fig. 1.1. $C_{s,\, l}^{\ast}$}
\end{picture}
\end{center}

By
investigating the relation between the structure of a graph
and the domination number, and investigating how the least $Q$-eigenvalue of a graph
changes under some perturbations, we consider the
relation between the least $Q$-eigenvalue and the domination number, showing that
among all the nonbipartite graphs with both order $n$ and domination number $\gamma$,
(i) if $n=3\gamma-1$, $3\gamma$, $3\gamma+1$, then the graph with the minimal least $Q$-eigenvalue attains uniquely
at  $C_{3,\, n-4}^*$ (see Fig. 1.2);
(ii) if $n\geq3\gamma+2$, then the graph with the minimal least $Q$-eigenvalue attains uniquely at
$C_{3,\, 3\gamma-3}^*$ (see Fig. 1.2).

\setlength{\unitlength}{0.5pt}
\begin{center}
\begin{picture}(643,153)
\put(8,131){\circle*{4}}
\put(8,57){\circle*{4}}
\qbezier(8,131)(8,94)(8,57)
\put(47,93){\circle*{4}}
\qbezier(8,131)(27,112)(47,93)
\qbezier(8,57)(27,75)(47,93)
\put(85,93){\circle*{4}}
\qbezier(47,93)(66,93)(85,93)
\put(120,93){\circle*{4}}
\qbezier(85,93)(102,93)(120,93)
\put(132,92){\circle*{4}}
\put(142,92){\circle*{4}}
\put(152,92){\circle*{4}}
\put(389,90){\circle*{4}}
\put(444,89){\circle*{4}}
\put(163,92){\circle*{4}}
\put(205,92){\circle*{4}}
\qbezier(163,92)(184,92)(205,92)
\put(47,102){$v_{1}$}
\put(1,140){$v_{2}$}
\put(4,40){$v_{3}$}
\put(293,124){\circle*{4}}
\put(293,55){\circle*{4}}
\qbezier(293,124)(293,90)(293,55)
\put(346,90){\circle*{4}}
\qbezier(293,124)(319,107)(346,90)
\qbezier(293,55)(319,73)(346,90)
\put(427,90){\circle*{4}}
\qbezier(346,90)(386,90)(427,90)
\put(459,89){\circle*{4}}
\put(472,89){\circle*{4}}
\put(486,90){\circle*{4}}
\put(531,90){\circle*{4}}
\qbezier(486,90)(508,90)(531,90)
\put(587,130){\circle*{4}}
\qbezier(531,90)(559,110)(587,130)
\put(597,110){\circle*{4}}
\qbezier(531,90)(564,100)(597,110)
\put(588,53){\circle*{4}}
\qbezier(531,90)(559,72)(588,53)
\put(594,71){\circle*{4}}
\put(596,82){\circle*{4}}
\put(597,93){\circle*{4}}
\put(80,103){$v_{4}$}
\put(156,106){$v_{n-1}$}
\put(211,91){$v_{n}$}
\put(345,99){$v_{1}$}
\put(287,133){$v_{2}$}
\put(289,38){$v_{3}$}
\put(383,99){$v_{4}$}
\put(510,102){$v_{3\gamma}$}
\put(588,140){$v_{3\gamma+1}$}
\put(604,110){$v_{3\gamma+2}$}
\put(595,47){$v_{n}$}
\put(95,23){$C^{\ast}_{3,n-4}$}
\put(422,23){$C^{\ast}_{3,3\gamma-3}$}
\put(160,-20){Fig. 1.2. two extremal graphs}
\end{picture}
\end{center}

\section{Preliminary}

\ \ \ \ In this section, we introduce some notations and some working lemmas.

We denote by $P_n$ a $path$ of order $n$,
 $K_{r,s}$ the $complete$ $bipartite$ graph with partite
sets of order $r$ and $s$. For a path $P$ and a cycle
$C$, we denote by $l(P)$, $l(C)$ their $lengths$ respectively.
The $distance$ between two vertices
$u$ and $v$ in a graph $G$, denoted by $d_{G}(u, v)$, is the length of the shortest path from $u$ to $v$; the
$distance$ between two subgraphs $G_{1}$ and $G_{2}$, denoted by $d_{G}(G_{1}, G_{1})$, is the length of the
shortest path from $G_{1}$ to $G_{2}$. Clearly, $d_{G}(G_{1}, G_{1})=\min\{d_{G}(u, v)\, |\, u\in V(G_{1}), v\in V(G_{2})\}$.
The $girth$ of a graph $G$, denoted by $g(G)$, is the length of the shortest cycle in $G$. For a nonbipartite graph $G$,
the $odd$ $girth$, denoted by $g_{o}(G)$, is the length of the shortest odd cycle.  Let $G-uv$
 denote the graph that arises from $G$ by deleting the edge $uv\in
 E(G)$, and let $G-v$
 denote the graph that arises from $G$ by deleting the vertex $v\in V(G)$ and the edges incident with $v$.
 Similarly,  $G+uv$ is the graph that arises from $G$ by adding an edge $uv$ between its two nonadjacent vertices $u$
 and $v$. For an edge set $E$, we let $G-E$ denote the graph obtained by deleting all the edges in $E$ from $G$. A $pendant$ $vertex$
 is a vertex of degree $1$. A vertex is called a $pendant$ $neighbor$ if it is adjacent to a pendant vertex.
 The $union$ of two simple graphs $H$ and $G$ is the simple graph $G\cup H$ with vertex set $V(G)\cup V(H)$
 and edge set $E(G)\cup E(H)$.

Let $G_1$ and $G_2$ be two disjoint graphs, and let $v_1\in V(G_1),$  $v_2\in V(G_2)$. The $coalescence$ of $G_1$ and $G_2$,
denoted by $G_1(v_1)\diamond G_2(v_2)$, is obtained from $G_1$, $G_2$ by identifying $v_1$ with $v_2$ and forming a new vertex
$u$ (see \cite{WF} for detail). The graph $G_1(v_1)\diamond G_2(v_2)$ is also written as $G_1(u)\diamond G_2(u)$. If a connected
graph $G$ can be expressed
in the form $G = G_1(u)\diamond G_2(u)$, where $G_1$ and $G_2$ are both nontrivial and connected, then for $i=1$, $2$, $G_i$ is
called a
$branch$ of $G$ with root $u$.

Let $G$ be a graph of order $n$, $X=(x_1, x_2, \ldots, x_n)^T \in R^n$ be defined on $V(G)$, that is,
each vertex $v_i$ is mapped to $x_i =x(v_i)$; let $|x(v_i)|$ denote the absolute value of $x(v_i)$.
One can find
that $X^TQ(G)X =\sum_{uv\in E(G)}[x(u) + x(v)]^2.$
In addition, for an arbitrary unit vector $X\in R^n$, $q_{min}(G) \le X^TQ(G)X$,
with equality if and only if $X$ is an eigenvector corresponding to $q_{min}(G)$.
For convenience, an eigenvector of $Q(G)$ sometimes be called an eigenvector of $G$.
A branch $H$ of $G$ is called a $zero$ $branch$ with respect to $X$ if $x(v) = 0$  for all $v \in V(H)$; otherwise, it
is called a $nonzero$ $branch$ with respect to $X$.

\begin{lemma}{\bf (\cite{D.R.S})} \label{le2,0} 
Let $G$ be a graph on $n$ vertices and $m$ edges, and let $e$ be an
edge of $G$. Let $q_{1}\geq q_{2}\geq \cdots \geq q_{n}$ and
$s_{1}\geq s_{2}\geq \cdots \geq s_{n}$ be the $Q$-eigenvalues of  $G$
and $G-e$ respectively. Then $0\leq s_{n}\leq q_{n}\leq \cdots \leq
s_{2}\leq q_{2}\leq s_{1}\leq q_{1}.$
\end{lemma}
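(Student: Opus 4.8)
The plan is to display $Q(G)$ as a rank-one positive semidefinite perturbation of $Q(G-e)$ and then to read off the interlacing from the min-max characterization of eigenvalues. Write $e=uv$ and set $\mathbf{w}=\mathbf{e}_u+\mathbf{e}_v\in R^n$, where $\mathbf{e}_u$ and $\mathbf{e}_v$ are the standard basis vectors indexed by $u$ and $v$. Deleting $e$ decreases each of $d_G(u)$ and $d_G(v)$ by $1$, replaces the $(u,v)$ and $(v,u)$ entries of the adjacency matrix by $0$, and leaves every other entry of the signless Laplacian unchanged; a direct comparison of entries therefore gives
\[
Q(G)=Q(G-e)+\mathbf{w}\mathbf{w}^T ,
\]
which is just the matrix version of the quadratic-form identity $X^TQ(G)X=X^TQ(G-e)X+[x(u)+x(v)]^2$ already recorded in the excerpt (two symmetric matrices with the same quadratic form are equal). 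The perturbation $\mathbf{w}\mathbf{w}^T$ is symmetric, positive semidefinite, and of rank $1$, with eigenvalues $\mathbf{w}^T\mathbf{w}=2$ (simple) and $0$ (with multiplicity $n-1$).

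First I would use the positive semidefiniteness of $\mathbf{w}\mathbf{w}^T$: since $X^TQ(G)X\ge X^TQ(G-e)X$ for every $X$, the Courant--Fischer theorem yields $s_i\le q_i$ for all $i$. For the complementary half of the interlacing I would invoke the rank-one structure through Weyl's inequality $\lambda_{i+1}(A+B)\le\lambda_i(A)+\lambda_2(B)$, applied with $A=Q(G-e)$ and $B=\mathbf{w}\mathbf{w}^T$; as $\lambda_2(\mathbf{w}\mathbf{w}^T)=0$, this gives $q_{i+1}\le s_i$ for $1\le i\le n-1$. (Equivalently, and in a self-contained way starting from the variational description of eigenvalues recorded in the excerpt, one restricts the min-max test vectors to the hyperplane $\mathbf{w}^{\perp}$ — a subspace of dimension $n-1$ on which the quadratic forms of $Q(G)$ and $Q(G-e)$ coincide — and pushes the estimate through.) Interleaving the two chains of inequalities produces
\[
s_n\le q_n\le s_{n-1}\le q_{n-1}\le\cdots\le s_1\le q_1 .
\]
It then remains only to observe that $s_n\ge 0$, which holds because $Q(G-e)$ is the signless Laplacian of a graph and hence positive semidefinite, as already noted for any graph. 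Combining this with the last display gives the claimed string of inequalities.

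There is no genuine obstacle in this argument — it is essentially a repackaging of standard Hermitian perturbation theory — and the one point that calls for care is the bookkeeping of the index shift in Weyl's inequality; in the subspace formulation this amounts to the observation that $\dim\mathbf{w}^{\perp}=n-1$ is precisely what is needed to transfer the min-max bound from $Q(G-e)$ to $Q(G)$. If one prefers to avoid quoting Weyl's inequality altogether, the subspace argument in the parenthetical remark can be run uniformly for every intermediate eigenvalue, relying only on the variational characterization of $q_{\min}$ stated in the excerpt together with its standard extension to the $i$-th largest eigenvalue.
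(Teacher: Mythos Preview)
Your argument is correct: the identity $Q(G)=Q(G-e)+\mathbf{w}\mathbf{w}^T$ with $\mathbf{w}=\mathbf{e}_u+\mathbf{e}_v$ is exactly right, and the interlacing then drops out of Courant--Fischer together with Weyl's inequality (or the equivalent restriction to $\mathbf{w}^\perp$), with $s_n\ge 0$ coming from positive semidefiniteness of $Q(G-e)$.

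Note, however, that the paper does not supply its own proof of this lemma at all: it is quoted verbatim from Cvetkovi\'{c}--Rowlinson--Simi\'{c} \cite{D.R.S} as a known result, with no argument given. Your proof is the standard one for interlacing under a rank-one positive semidefinite perturbation and is precisely what one finds in that reference, so there is nothing to compare beyond observing that you have reconstructed the expected argument.
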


\begin{lemma}{\bf (\cite{WF})}\label{le2,1} 
Let $G$ be a connected graph which contains a bipartite branch $H$ with root $u$.  Let $X$ be an eigenvector of $G$ corresponding
to $\kappa(G)$.

{\normalfont (i)} If $x(u) = 0$, then $H$ is a zero branch of G with respect to $X$;

{\normalfont (ii)} If $x(u)\neq 0$, then $x(p)\neq 0$ for every vertex $p\in V(H)$. Furthermore, for every vertex $p\in V(H)$,
$x(p)x(u)$ is either positive or negative, depending on whether $p$ is or is not in the same part of the bipartite graph $H$ as
$u$; consequently, $x(p)x(q) < 0$ for each edge $pq \in E(H)$.
\end{lemma}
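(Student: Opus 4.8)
The plan is to combine the variational description of $\kappa(G)=q_{min}(G)$ recalled above --- for every unit vector $X$, $q_{min}(G)\le X^TQ(G)X=\sum_{pq\in E(G)}(x(p)+x(q))^2$, with equality precisely when $X$ is a $q_{min}(G)$-eigenvector --- with a sign-switching construction on the bipartite branch. Write $G=G_1(u)\diamond H(u)$, let $(B_1,B_2)$ be the bipartition of $H$ with $u\in B_1$, and put $\epsilon(v)=+1$ for $v\in B_1$ and $\epsilon(v)=-1$ for $v\in B_2$; note that $E(G)$ is the disjoint union $E(G_1)\cup E(H)$, while $V(G)=V(G_1)\cup V(H)$ with $V(G_1)\cap V(H)=\{u\}$. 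For a sign $s\in\{+1,-1\}$ that is \emph{admissible at $u$} (i.e.\ $s\,\epsilon(u)\,|x(u)|=x(u)$; when $x(u)\ne0$ this means $s=\mathrm{sign}(x(u))$, and when $x(u)=0$ both signs are admissible), I would define $Y=Y_s$ by $y(v)=x(v)$ for $v\in V(G_1)$ and $y(v)=s\,\epsilon(v)\,|x(v)|$ for $v\in V(H)$, admissibility making this well defined at the cut vertex $u$. Then $\|Y\|=\|X\|$, and on each edge $pq\in E(H)$ (where $\epsilon(p)=-\epsilon(q)$) one has $(y(p)+y(q))^2=(|x(p)|-|x(q)|)^2\le(x(p)+x(q))^2$, while the edges of $G_1$ contribute identically; hence $Y^TQ(G)Y\le X^TQ(G)X=q_{min}(G)\|Y\|^2$. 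Since $Y^TQ(G)Y\ge q_{min}(G)\|Y\|^2$ holds for every vector, $Y$ is again a $q_{min}(G)$-eigenvector, and equality must in fact hold term-by-term on $E(H)$, which gives $x(p)x(q)\le0$ for every $pq\in E(H)$.

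For part (ii) I would assume $x(u)\ne0$ and take $Y=Y_{\mathrm{sign}(x(u))}$, so $y(u)\ne0$ and on $V(H)$ the number $y(v)$ is either $0$ or of sign $\mathrm{sign}(s)\,\epsilon(v)$. Suppose, for contradiction, that some vertex of $H$ has $y$-value $0$, and let $w$ be one at minimum distance from $u$ in $H$; then $w\ne u$, and the vertex $w'$ preceding $w$ on a shortest path in $H$ from $u$ to $w$ has $y(w')\ne0$. The eigenvalue equation $q_{min}(G)y(w)=d_G(w)y(w)+\sum_{t\sim w}y(t)$ at $w$ then reads $\sum_{t\sim w}y(t)=0$; but since $w\ne u$, all neighbours $t$ of $w$ lie in $H$ and in the part of $H$ opposite to $w$, so the numbers $y(t)$ all have one fixed sign and the vanishing of their sum forces each $y(t)=0$, contradicting $y(w')\ne0$. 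Hence $y(v)\ne0$, and so $x(v)\ne0$, for every $v\in V(H)$; combined with $x(p)x(q)\le0$ on edges this strengthens to $x(p)x(q)<0$ on every edge of $H$, and multiplying signs along a path in $H$ from $u$ to any $p$ shows that $x(p)x(u)$ is positive or negative according as $p$ does or does not lie in the same part of $H$ as $u$.

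For part (i) I would assume $x(u)=0$, so that now \emph{both} $Y_{+1}$ and $Y_{-1}$ are $q_{min}(G)$-eigenvectors and hence so is their difference $W:=Y_{+1}-Y_{-1}$ whenever it is nonzero. Explicitly $w(v)=0$ on $V(G_1)$ and $w(v)=2\epsilon(v)|x(v)|$ on $V(H)$, so $W$ is supported on $V(H)\setminus\{u\}$. If $X$ did not vanish identically on $V(H)$, then $W\ne0$, so $W$ would be a genuine $q_{min}(G)$-eigenvector; the eigenvalue equation for $W$ at $u$, together with $w\equiv0$ on $V(G_1)$ and $w(u)=0$, would give $\sum_{t\sim_H u}w(t)=0$. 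Every such $t$ is in $B_2$, so $w(t)=-2|x(t)|\le0$, and the vanishing sum forces $x(t)=0$ for every neighbour $t$ of $u$ in $H$. Applying the eigenvalue equation for $W$ at those neighbours and iterating outward through $H$ (which is connected) would force $W$, hence $X$, to vanish on all of $V(H)$ --- a contradiction. Thus $X\equiv0$ on $V(H)$, i.e.\ $H$ is a zero branch with respect to $X$.

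I expect part (i) to be the main obstacle: one must rule out the case in which $X$ is nonzero on $H$ but its values conspire to cancel at the root $u$, and the clean way to do this seems to be exactly the trick of using \emph{two} admissible sign switchings simultaneously, which manufactures an eigenvector supported away from $u$ whose vanishing can then be propagated outward from $u$ through the connected bipartite graph $H$. The remaining care goes into the bookkeeping at the cut vertex $u$ (which belongs to both branches) so that every sign-switched vector is consistently defined there; once that is arranged, both parts run on the single observation that each vertex of $H$ other than $u$ has all of its $G$-neighbours inside a single part of the bipartite graph $H$.
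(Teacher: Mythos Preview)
The paper does not prove this lemma; it is quoted without proof from the cited reference \cite{WF} (Wang--Fan), so there is no in-paper argument to compare against.

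Your proof is correct. The sign-switching vector $Y_s$ together with the term-by-term equality on $E(H)$ forced by $Y_s^TQ(G)Y_s=q_{min}(G)\|Y_s\|^2$ is the natural variational route, and your handling of part~(i) via the difference $W=Y_{+1}-Y_{-1}$ is clean: it manufactures an eigenvector supported on $V(H)\setminus\{u\}$ whose entries on each bipartite class have a fixed sign, so that the eigenvalue equation at $u$ and then inductively at vertices of increasing distance from $u$ kills $W$ (and hence $X$) on all of the connected graph $H$. The one spot where a reader might want an extra sentence is the ``iterating outward'' step in~(i): it is worth saying explicitly, as you did in~(ii), that each vertex reached in the iteration lies in $V(H)\setminus\{u\}$, hence all of its $G$-neighbours lie in a single part of $H$, which is exactly what makes the same-sign argument apply at every stage.
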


\begin{lemma}{\bf (\cite{WF})}\label{le2,4} 
Let $G$ be a connected nonbipartite graph of order $n$, and let $X$ be an eigenvector of $G$ corresponding to $\kappa(G)$.
Let $T$ be a
tree, which is a nonzero branch of $G$ with respect to $X$ and with root $u$. Then $|x(q)| < |x(p)|$
 whenever $p,$ $q$
are vertices of $T$ such that $q$ lies on the unique path from $u$ to $p$.
\end{lemma}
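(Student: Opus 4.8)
The plan is to read the sign pattern of $X$ off the bipartite branch $T$ (via Lemmas~\ref{le2,0} and~\ref{le2,1}), compress the eigenvalue equation into a single identity involving only the absolute values $|x(\cdot)|$, and then run an induction from the leaves of $(T,u)$ towards its root.

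First I would observe that $T$, being a tree, is bipartite; since $T$ is a \emph{nonzero} branch, Lemma~\ref{le2,1}(i) excludes $x(u)=0$, so $x(u)\neq0$ and Lemma~\ref{le2,1}(ii) yields $x(v)\neq0$ for every $v\in V(T)$, with signs alternating along every edge of $T$. After possibly replacing $X$ by $-X$ we may therefore assume $x(v)=(-1)^{d_T(u,v)}\,|x(v)|$ for all $v\in V(T)$. Root $T$ at $u$; for $v\neq u$ let $v^{-}$ be its parent and $c_1,\dots,c_k$ its children, so $d_T(v)=k+1$. Because $T$ is a branch, every $G$-neighbour of a vertex $v\in V(T)\setminus\{u\}$ lies in $T$, hence $d_G(v)=d_T(v)$, and the eigenvalue equation $Q(G)X=q_{min}(G)X$ at such a $v$ reads $x(v^{-})+\sum_{i=1}^{k}x(c_i)=(q_{min}(G)-d_T(v))x(v)$. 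As $v^{-}$ and all $c_i$ lie in the part of $T$ opposite to $v$, substituting the sign pattern and cancelling the common factor $(-1)^{d_T(u,v)}$ produces the identity
\[
|x(v^{-})|+\sum_{i=1}^{k}|x(c_i)| \;=\; \bigl(d_T(v)-q_{min}(G)\bigr)\,|x(v)|,\qquad v\in V(T)\setminus\{u\},
\]
which I will call $(\star)$.

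Two remarks then carry the argument. The left side of $(\star)$ is strictly positive (since $x(v^{-})\neq0$), so $d_T(v)>q_{min}(G)$ for every non-root $v\in V(T)$; applying this to a leaf of $T$ different from $u$ (one exists because $T$ is a nontrivial tree) gives $q_{min}(G)<1$, while $q_{min}(G)>0$ because $G$ is nonbipartite. I would then prove, by induction on the height of $v$ in $(T,u)$, that $|x(v^{-})|<|x(v)|$ for every $v\in V(T)\setminus\{u\}$. If $v$ is a leaf, $(\star)$ reads $|x(v^{-})|=(1-q_{min}(G))|x(v)|<|x(v)|$. If $v$ has positive height, each child $c_i$ has strictly smaller height, so the inductive hypothesis applied to $c_i$ gives $|x(c_i)|>|x(c_i^{-})|=|x(v)|$; hence, using $(\star)$ and $\sum_{i=1}^{k}|x(c_i)|>k|x(v)|$,
\[
|x(v^{-})| \;=\; (k+1-q_{min}(G))\,|x(v)|-\sum_{i=1}^{k}|x(c_i)| \;<\; (1-q_{min}(G))\,|x(v)| \;<\; |x(v)|.
\]
Finally, given $p,q\in V(T)$ with $q$ on the path from $u$ to $p$, write that path as $u=w_0,w_1,\dots,w_m=p$ and $q=w_j$ with $j<m$; applying the inequality just proved along the edges $w_{i-1}w_i$ yields $|x(q)|=|x(w_j)|<|x(w_{j+1})|<\dots<|x(w_m)|=|x(p)|$, which is the assertion.

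The only substantive step is deriving $(\star)$ and noticing that it already forces $q_{min}(G)<1$; once that is in hand, the induction is routine bookkeeping. The two points needing care are (a) justifying that a non-root vertex of the branch $T$ retains all of its $G$-neighbours inside $T$ (so that $d_G(v)=d_T(v)$ and the sign cancellation is legitimate), and (b) organising the induction by \emph{height} in $(T,u)$, so that the children of a vertex are already treated when that vertex is reached. I do not anticipate any obstacle more serious than this.
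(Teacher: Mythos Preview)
The paper does not give its own proof of this lemma; it is quoted from \cite{WF} and stated without argument, so there is nothing in the paper to compare your proposal against directly. That said, your argument is sound: the derivation of $(\star)$ is correct (the key point that every non-root vertex of the branch has all of its $G$-neighbours inside $T$ follows immediately from the definition of a branch as one side of a coalescence at $u$), the bound $0<q_{min}(G)<1$ is legitimately extracted from $(\star)$ at a leaf together with nonbipartiteness, and the height induction then goes through cleanly. One cosmetic slip: your opening sentence invokes Lemma~\ref{le2,0} (the edge-deletion interlacing lemma), which plays no role anywhere in the argument; only Lemma~\ref{le2,1} is actually used to obtain the sign pattern on $T$.
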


\begin{lemma}{\bf (\cite{YGX})}\label{le2,3} 
 Let $G = G_1(v_2) \diamond T(u)$ and $G^* = G_1(v_1)\diamond T(u)$, where $G_1$ is a connected nonbipartite
graph containing two distinct vertices $v_1, v_2$, and $T$ is a nontrivial tree. If there exists an
 eigenvector $X=(\,x(v_1)$, $x(v_2)$, $\ldots$, $x(v_k)$, $\ldots)^T$ of $G$ corresponding to $\kappa(G)$ such that
 $|x(v_1)| > |x(v_2)|$ or $|x(v_1)| = |x(v_2)| > 0$, then $\kappa(G^*)<\kappa(G)$.
\end{lemma}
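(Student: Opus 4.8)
The plan is to prove the strict inequality $\kappa(G^{*})<\kappa(G)$ by producing, in each of a small number of cases, a test vector $Y$ on $V(G^{*})$ whose Rayleigh quotient $Y^{T}Q(G^{*})Y/\|Y\|^{2}$ is strictly less than $\kappa(G)$ — or, in one borderline case, equal to $\kappa(G)$ together with a separate argument that $Y$ is not an eigenvector. Write $\kappa=\kappa(G)$ and let $w_{0}$ denote the common vertex of the two branches of $G$ (so $w_{0}$ plays the role of $v_{2}$ in $G_{1}$ and of $u$ in $T$). Since $G_{1}$, hence $G$, is nonbipartite, $\kappa>0$ by \cite{D.P.S}. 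Because $V(G^{*})=V(G_{1})\cup(V(T)\setminus\{u\})$ just as $V(G)=V(G_{1})\cup(V(T)\setminus\{w_{0}\})$, a vector on $V(G^{*})$ is determined by its values on $V(G_{1})$ together with its values on $V(T)$; in every case I take $Y$ to agree with $X$ on $V(G_{1})$ and only modify the tree part, using the identity $Y^{T}Q(G^{*})Y=\sum_{ab\in E(G^{*})}[y(a)+y(b)]^{2}$ throughout.

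\textbf{Case 1: $x(v_{2})\neq 0$.} Then $T$ is a nonzero branch of $G$ with root $w_{0}$, and since $T$ is a tree (hence bipartite), Lemma \ref{le2,1}(ii) gives $x(a)x(v_{2})<0$ and Lemma \ref{le2,4} gives $|x(a)|>|x(v_{2})|$ for every neighbour $a$ of $w_{0}$ in $T$. Put $t=x(v_{1})/x(v_{2})$ and define $y(w)=t\,x(w)$ for $w\in V(T)$; this is consistent with $y(v_{1})=x(v_{1})$ at the coalescence vertex of $G^{*}$. Expanding, one gets $Y^{T}Q(G^{*})Y=A+t^{2}B$ and $\|Y\|^{2}=P+t^{2}R$, where $A=\sum_{ab\in E(G_{1})}[x(a)+x(b)]^{2}$, $B=\sum_{ab\in E(T)}[x(a)+x(b)]^{2}$, $P=\sum_{w\in V(G_{1})}x(w)^{2}$, $R=\sum_{w\in V(T)\setminus\{w_{0}\}}x(w)^{2}$; and $A-\kappa P=-(B-\kappa R)$ because $X$ is a $\kappa$-eigenvector of $G$. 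Summing the eigenvalue equation $(\kappa-d_{T}(w))x(w)=\sum_{a\sim w}x(a)$ — valid for each $w\in V(T)\setminus\{w_{0}\}$ since all $G$-neighbours of such a $w$ lie in $T$ — against $x(w)$ and simplifying, I expect to reach the identity
\[
B-\kappa R=x(v_{2})\Bigl(d_{T}(u)\,x(v_{2})+\textstyle\sum_{a\sim w_{0}}x(a)\Bigr),
\]
the sum being over neighbours of $w_{0}$ in $T$; by the sign and magnitude facts above this quantity is strictly negative. Hence $Y^{T}Q(G^{*})Y-\kappa\|Y\|^{2}=(t^{2}-1)(B-\kappa R)$. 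If $|x(v_{1})|>|x(v_{2})|$ then $t^{2}>1$, this is negative, and $\kappa(G^{*})\le Y^{T}Q(G^{*})Y/\|Y\|^{2}<\kappa$. If $|x(v_{1})|=|x(v_{2})|>0$ then $t^{2}=1$ and the Rayleigh quotient of $Y$ on $G^{*}$ equals $\kappa$; to still get strict inequality I argue that $\kappa(G^{*})=\kappa$ would force $Y$ to be a $\kappa$-eigenvector of $Q(G^{*})$, whereupon the eigenvalue equation at $v_{1}$ collapses to $d_{T}(u)x(v_{2})+\sum_{a\sim w_{0}}x(a)=0$, contradicting $|x(a)|>|x(v_{2})|$ for every neighbour $a$. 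So $\kappa(G^{*})<\kappa$.

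\textbf{Case 2: $x(v_{2})=0$.} Then $x(v_{1})\neq 0$, and since $x(w_{0})=0$ Lemma \ref{le2,1}(i) shows $T$ is a zero branch of $G$, so $x\equiv 0$ on $V(T)$ and therefore $\sum_{ab\in E(G_{1})}[x(a)+x(b)]^{2}=\kappa\sum_{w\in V(G_{1})}x(w)^{2}$. Define $y(w)=(-1)^{d_{T}(w_{0},w)}x(v_{1})$ for $w\in V(T)\setminus\{u\}$ (consistent at the coalescence vertex). Each edge of $T$ joins the two sides of the bipartition of $T$, so $y(a)+y(b)=0$ on every tree edge, whence $Y^{T}Q(G^{*})Y=\sum_{ab\in E(G_{1})}[x(a)+x(b)]^{2}=\kappa\sum_{w\in V(G_{1})}x(w)^{2}$ while $\|Y\|^{2}=\sum_{w\in V(G_{1})}x(w)^{2}+(|V(T)|-1)x(v_{1})^{2}$. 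Hence $Y^{T}Q(G^{*})Y-\kappa\|Y\|^{2}=-\kappa(|V(T)|-1)x(v_{1})^{2}<0$, using $\kappa>0$, $|V(T)|\ge 2$ and $x(v_{1})\neq 0$; so again $\kappa(G^{*})\le Y^{T}Q(G^{*})Y/\|Y\|^{2}<\kappa$.

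The step I expect to be the main obstacle is the borderline sub-case $|x(v_{1})|=|x(v_{2})|>0$ of Case 1: there the natural scaled test vector only yields $\kappa(G^{*})\le\kappa$, and upgrading this to a strict inequality requires the extra observation that $Y$ cannot be a $\kappa$-eigenvector of $Q(G^{*})$, for which the conclusions $x(a)x(v_{2})<0$ and, crucially, $|x(a)|>|x(v_{2})|$ at every neighbour $a$ of the root of $T$ (Lemmas \ref{le2,1}(ii) and \ref{le2,4}) are precisely what is used. The remaining technical point — deriving the closed form for $B-\kappa R$ from the vertex equations on $T$ — is bookkeeping, and everything else is a routine Rayleigh-quotient estimate.
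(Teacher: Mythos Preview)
The paper does not give its own proof of this lemma: it is quoted verbatim from \cite{YGX} and used as a black box, so there is no in-paper argument to compare against. That said, your proof is correct and is precisely the kind of Rayleigh-quotient transplant argument one expects for such a perturbation lemma.

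A couple of minor remarks on presentation. Your identity $B-\kappa R=x(v_{2})\bigl(d_{T}(u)\,x(v_{2})+\sum_{a}x(a)\bigr)$ does follow from the vertex equations on $V(T)\setminus\{w_{0}\}$ as you indicate; it is worth writing out the two-line derivation explicitly rather than saying ``I expect to reach'', since this is the computational heart of Case~1. In the borderline sub-case $|x(v_{1})|=|x(v_{2})|>0$, your contradiction is cleanest if you point out that $d_{T}(u)\,x(v_{2})+\sum_{a}x(a)=\sum_{a}\bigl(x(v_{2})+x(a)\bigr)$, and that each summand has the sign of $x(a)$ (opposite to $x(v_{2})$) by Lemmas~\ref{le2,1}(ii) and~\ref{le2,4}; hence the sum is nonzero, which is exactly what you need. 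Your current phrasing ``contradicting $|x(a)|>|x(v_{2})|$'' is slightly elliptical but points at the right fact. Case~2 is entirely routine and correct as written.
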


\setlength{\unitlength}{0.6pt}
\begin{center}
\begin{picture}(577,166)
\put(48,78){\circle*{4}}
\put(86,124){\circle*{4}}
\qbezier(48,78)(67,101)(86,124)
\put(120,78){\circle*{4}}
\qbezier(86,124)(104,101)(122,78)
\qbezier(48,78)(85,78)(122,78)
\qbezier(0,77)(0,85)(6,91)\qbezier(6,91)(13,97)(24,97)\qbezier(24,97)(34,97)(41,91)\qbezier(41,91)(48,85)(48,77)
\qbezier(48,77)(48,68)(41,62)\qbezier(41,62)(34,57)(24,57)\qbezier(24,57)(13,57)(6,62)\qbezier(6,62)(0,68)(0,77)
\qbezier(119,82)(119,91)(126,97)\qbezier(126,97)(134,104)(145,104)\qbezier(145,104)(155,104)(163,97)
\qbezier(163,97)(171,91)(171,82)\qbezier(171,82)(171,72)(163,66)\qbezier(163,66)(155,60)(145,60)
\qbezier(145,60)(134,60)(126,66)\qbezier(126,66)(118,72)(119,82)
\qbezier(62,142)(62,149)(69,155)\qbezier(69,155)(77,161)(88,161)\qbezier(88,161)(98,161)(106,155)
\qbezier(106,155)(114,149)(114,142)\qbezier(114,142)(114,134)(106,128)\qbezier(106,128)(98,123)(88,123)
\qbezier(88,123)(77,123)(69,128)\qbezier(69,128)(61,134)(62,142)
\put(79,106){$v_{1}$}
\put(49,66){$v_{2}$}
\put(102,66){$v_{3}$}
\put(80,141){$T_{1}$}
\put(14,76){$T_{2}$}
\put(139,81){$T_{3}$}
\put(30,10){$\mathcal {C}^{(T_{1}, T_{2}, T_{3}; 1,2,3)}_{3}$}
\put(274,81){\circle*{4}}
\put(311,128){\circle*{4}}
\qbezier(274,81)(292,105)(311,128)
\put(346,90){\circle*{4}}
\qbezier(311,128)(328,109)(346,90)
\qbezier(274,81)(310,86)(346,90)
\put(230,122){\circle*{4}}
\qbezier(274,81)(278,145)(230,122)
\qbezier(274,81)(208,87)(230,122)
\put(238,42){\circle*{4}}
\qbezier(274,81)(219,82)(238,42)
\qbezier(274,81)(292,36)(238,42)
\qbezier(288,147)(288,155)(295,160)\qbezier(295,160)(303,166)(314,166)\qbezier(314,166)(324,166)(332,160)
\qbezier(332,160)(340,155)(340,147)\qbezier(340,147)(340,138)(332,133)\qbezier(332,133)(324,127)(314,127)
\qbezier(314,128)(303,128)(295,133)\qbezier(295,133)(287,138)(288,147)
\put(303,109){$v_{1}$}
\put(281,68){$v_{2}$}
\put(305,146){$T_{1}$}
\put(241,106){$T_{2}$}
\put(244,55){$T_{3}$}
\put(354,90){$v_{3}$}
\put(238,10){$\mathcal {C}^{(T_{1}, T_{2}, T_{3}; 1,2,2)}_{3}$}
\put(431,59){\circle*{4}}
\put(486,102){\circle*{4}}
\qbezier(431,59)(458,81)(486,102)
\put(535,59){\circle*{4}}
\qbezier(486,102)(510,81)(535,59)
\qbezier(431,59)(483,59)(535,59)
\put(427,120){\circle*{4}}
\qbezier(486,102)(413,70)(427,120)
\qbezier(427,120)(444,154)(486,102)
\put(492,157){\circle*{4}}
\qbezier(486,102)(446,157)(492,157)
\qbezier(486,102)(531,151)(492,157)
\put(546,103){\circle*{4}}
\qbezier(486,102)(540,149)(546,103)
\qbezier(486,102)(547,62)(546,103)
\put(477,86){$v_{1}$}
\put(437,109){$T_{1}$}
\put(481,138){$T_{2}$}
\put(522,103){$T_{3}$}
\put(417,50){$v_{2}$}
\put(540,56){$v_{3}$}
\put(444,10){$\mathcal {C}^{(T_{1}, T_{2}, T_{3}; 1)}_{3}$}
\put(80,-27){Fig. 2.1. $\mathcal {C}^{(T_{1}, T_{2}, T_{3}; 1,2,3)}_{3}$, $\mathcal {C}^{(T_{1}, T_{2}, T_{3}; 1,2,2)}_{3}$,
$\mathcal {C}^{(T_{1}, T_{2}, T_{3}; 1)}_{3}$}
\end{picture}
\end{center}

\vspace{0.35cm}

Let $k\geq 3$ be odd, and let $\mathcal {C}=v_1v_2\cdots v_kv_1$ be a cycle of length $k$. For $j=1$, $2$, $\ldots$, $t$,
each $T_{j}$ is a nontrivial tree. Let $\mathcal {C}^{(T_{1}, T_{2}, \ldots, T_{t};i_{1}, i_{2}, \ldots, i_{t})}_{k}$
denote the graph obtained by identifying the vertex $u_{j}$ of $T_{j}$ and the vertex $v_{i_{j}}$ of $\mathcal {C}$,
where $1\leq j\leq t$ and for $1\leq l< f\leq t$, $i_{l}=i_{f}$ possibly. Here, in
$\mathcal {C}^{(T_{1}, T_{2}, \ldots, T_{t};i_{1}, i_{2}, \ldots, i_{t})}_{k}$, for any $1\leq j\leq t$,
we denote by $v_{i_{j}}$ the new vertex obtained by identifying $v_{i_{j}}$ and $u_{j}$. Let $\mathcal {C}^{T_{1}, T_{2},
\ldots, T_{t}}_{(k, n)}=\{\mathcal {C}^{(T_{1}, T_{2}, \ldots, T_{t};i_{1}, i_{2}, \ldots, i_{t})}_{k}|\,
\mathcal {C}^{(T_{1}, T_{2}, \ldots, T_{t};i_{1}, i_{2}, \ldots, i_{t})}_{k}$ be of order $n\}$ and let
$\mathcal {C}^{(T_{1}, T_{2}, \ldots, T_{t};1)}_{k}=\mathcal {C}^{(T_{1}, T_{2}, \ldots, T_{t};1, 1, \ldots, 1)}_{k}$.
For understanding easily, we show three examples in Fig. 2.1.

\begin{lemma}\label{le2,7} 
Let $k< n$ be odd and $\mathcal {C}^{(T_{1}, T_{2}, \ldots, T_{t};i_{1}, i_{2}, \ldots, i_{t})}_{k}$ be of order $n$.
$X=(\,x(v_1)$, $x(v_2)$, $\ldots$, $x(v_k)$, $x(v_{k+1})$, $x(v_{k+2})$, $\ldots$, $x(v_{n-1})$, $x(v_{n})\,)^T$ is a
unit eigenvector corresponding to $\kappa(\mathcal {C}^{(T_{1}, T_{2}, \ldots, T_{t};i_{1}, i_{2}, \ldots, i_{t})}_{k})$.
Then $\max\{|x(v_{i_{j}})|\, |\, 1\leq j\leq t\}>0$.
\end{lemma}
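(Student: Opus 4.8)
The plan is to argue by contradiction: suppose $x(v_{i_j})=0$ for every $j$ with $1\le j\le t$, and derive that $X$ must then be the zero vector, contradicting that $X$ is a unit eigenvector. The point is that once all the ``attachment vertices'' $v_{i_j}$ carry value zero, each tree $T_j$ is a bipartite branch of $\mathcal{C}^{(T_1,\ldots,T_t;i_1,\ldots,i_t)}_k$ rooted at $v_{i_j}$ with root-value zero, so by Lemma \ref{le2,1}(i) every vertex of every $T_j$ has $X$-value zero. Hence $X$ is supported on the cycle $\mathcal{C}=v_1v_2\cdots v_kv_1$ only, and the restriction of $X$ to the cycle must itself behave like an eigenvector there.

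Next I would look at the eigenvalue equation $Q(G)X=\kappa(G)X$ restricted to the cycle vertices. For a vertex $v_i$ of the cycle that is \emph{not} one of the attachment points, its degree in $G$ is $2$ and its neighbours are its two cycle-neighbours, so the equation reads $(2-\kappa)x(v_i)=-x(v_{i-1})-x(v_{i+1})$, exactly the signless-Laplacian eigen-equation of the bare cycle $C_k$. For an attachment vertex $v_{i_j}$ we already know $x(v_{i_j})=0$, and since all the tree vertices vanish, the contribution of the tree-neighbours to the equation at $v_{i_j}$ is zero; thus the equation at $v_{i_j}$ also reduces to $-x(v_{i_j-1})-x(v_{i_j+1})=0$ together with $x(v_{i_j})=0$. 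Combining, $X$ restricted to the cycle is a (possibly zero) eigenvector of $Q(C_k)$ for the eigenvalue $\kappa(G)$, with the additional constraint that it vanishes at each attachment index.

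Now I invoke the fact that $k$ is odd: the least $Q$-eigenvalue of an odd cycle $C_k$ is $q_{min}(C_k)=2-2\cos\bigl(\pi(k-1)/k\bigr)>0$, it is simple, and its eigenvector has all coordinates nonzero (indeed of alternating-type magnitude pattern, never zero). More to the point, I need that $\kappa(G)<q_{min}(C_k)$, or at any rate that no eigenvector of $Q(C_k)$ at the value $\kappa(G)$ can vanish at a vertex while being nonzero: since $C_k$ with $k$ odd is nonbipartite and connected, Lemma \ref{le2,1} (applied with $C_k$ itself, which is not bipartite, so one instead uses Lemma \ref{le2,4}-type reasoning) — more simply, a nonzero solution of the three-term recurrence $(2-\kappa)x(v_i)=-x(v_{i-1})-x(v_{i+1})$ that vanishes at one vertex is determined up to scalar and, running the recurrence around the odd cycle, forces $x\equiv 0$ unless the closing-up condition fails. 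So if the cycle-restriction of $X$ vanishes at some $v_{i_j}$ it must vanish identically on the cycle, whence $X=0$, a contradiction. This shows $\max\{|x(v_{i_j})|:1\le j\le t\}>0$.

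The main obstacle is the clean handling of the cycle recurrence: I must make sure that a nonzero eigenvector of $G$ cannot restrict to a nonzero eigenvector of $Q(C_k)$ \emph{that happens to vanish at an attachment vertex}, and that it cannot restrict to the zero vector on the cycle while being nonzero on some tree (ruled out by the first paragraph). The delicate point is therefore purely the statement that for odd $k$, any solution of the cyclic three-term recurrence at parameter $\kappa=\kappa(G)$ which is zero at a single vertex is identically zero; this is where oddness of $k$ is essential (for even $k$ the alternating vector $(+1,-1,+1,\ldots)$ is a $q_{min}=0$ eigenvector, but it is nowhere zero, so even there the vanishing-at-a-vertex conclusion would hold — the real subtlety is matching $\kappa(G)$, which need not equal $q_{min}(C_k)$, so I should phrase the argument via the recurrence and the forced-zero propagation rather than via knowing the exact spectrum of $C_k$). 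I expect the write-up to route cleanly through Lemma \ref{le2,1} for the trees and an elementary propagation argument for the cycle.
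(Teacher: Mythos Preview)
Your reduction to the cycle is correct: once Lemma~\ref{le2,1}(i) kills the trees, the restriction of $X$ to $\mathcal{C}$ satisfies the $Q(C_k)$-eigenvector equation at $\kappa(G)$, so it is either zero (and you are done) or a genuine eigenvector of $Q(C_k)$ at $\kappa(G)$ that vanishes at every attachment vertex $v_{i_j}$. The gap is in how you rule out the second alternative. Your ``forced-zero propagation'' does \emph{not} work as stated: for an odd cycle $C_k$, every $Q$-eigenvalue other than $4$ has multiplicity two, with eigenspace spanned by $\bigl(\cos(2\pi j i/k)\bigr)_i$ and $\bigl(\sin(2\pi j i/k)\bigr)_i$; a suitable linear combination vanishes at any prescribed vertex. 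So knowing $x(v_{i_1})=0$ and running the three-term recurrence around the cycle does \emph{not} force $x\equiv 0$ when $\kappa(G)$ happens to lie in $\mathrm{Spec}(Q(C_k))$. You noticed this possibility (``I need that $\kappa(G)<q_{min}(C_k)$'') but then abandoned it for the propagation route, which is exactly the route that fails.

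The fix you gestured at is the right one: prove $\kappa(G)<q_{min}(C_k)$, so that $\kappa(G)\notin\mathrm{Spec}(Q(C_k))$ and the cycle-restriction is forced to be zero. This follows from a test-vector argument: take any eigenvector $Z$ of $Q(C_k)$ for $q_{min}(C_k)$ with $Z(v_{i_1})\neq 0$ (such $Z$ exists because the two basis vectors $\cos,\sin$ cannot both vanish at the same index), and extend $Z$ to $G$ by setting $\hat Z(w)=(-1)^{d_{T_j}(v_{i_j},w)}Z(v_{i_j})$ on each tree; then $\hat Z^TQ(G)\hat Z=Z^TQ(C_k)Z$ while $\hat Z^T\hat Z>Z^TZ$, giving the strict inequality. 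With that in hand your argument closes.

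For comparison, the paper's proof does not analyse the eigen-equation on the cycle at all. Instead it exploits the rotational symmetry of $\mathcal{C}$: it picks a cycle vertex $v_s$ with $x(v_s)\neq 0$, rotates all attachments by $|i_1-s|$ to obtain an isomorphic graph $U\cong G$, and defines a test vector $Y$ on $U$ that agrees with $X$ on the cycle and is extended into each tree by $(-1)^{d}\,x(v_{a_j})$. Then $Y^TQ(U)Y=X^TQ(G)X$ but $Y^TY>X^TX$ (since at least $T_1$, now rooted at $v_s$, carries nonzero values), whence $\kappa(U)<\kappa(G)$, contradicting $U\cong G$. This is essentially the same alternating-extension trick as the test-vector fix above, but packaged so that the comparison is with $G$ itself rather than with $C_k$; it sidesteps any spectral analysis of the bare cycle.
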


\begin{proof}
Assume that $\max\{|x(v_{i_{j}})|\, |\, 1\leq j\leq t\}=0$. By Lemma \ref{le2,1}, we know that $T_{1}, T_{2}, \ldots, T_{t}$
are all zero branch. Because $X\neq 0$, there must be a vertex $v_{s}\in V(\mathcal {C})$ such that
$x(v_{s})\neq 0$. Suppose that $i_{1}<s$ and $|i_{1}-s|=\min\{|i_{j}-s|\, |\, 1\leq j\leq t\}$. Let
$U=\mathcal {C}^{(T_{1}, T_{2}, \ldots, T_{t};s, a_{2}, a_{3}, \ldots, a_{t})}_{k}$, where for
$2\leq j\leq t$, $a_{j}\equiv i_{j}+|i_{1}-s|\ (\mathrm{mod}\ k)$, and denote by $v_{a_{j}}$ the new vertex
obtained by identifying $v_{a_{j}}$ and $u_{j}$ of $T_{j}$. Let $Y=(\,y(v_1)$, $y(v_2)$, $\ldots$, $y(v_k)$,
$y(v_{k+1})$, $y(v_{k+2})$, $\ldots$, $y(v_{n-1})$, $y(v_{n})\,)^T$ be a vector defined on $V(U)$ satisfying
$$y(w)=\left \{\begin{array}{ll}
 (-1)^{d_{T_{j}}(v_{a_{j}}, w)}x(v_{a_{j}}),\ & \ w\in V(T_{j})\ for\ j=1,\ 2,\ \ldots,\ t;
\\ x(w),\ & \ others. \end{array}\right.$$
Note that
$Y^{T}Q(U)Y=X^{T}Q(\mathcal {C}^{(T_{1}, T_{2}, \ldots, T_{t};i_{1}, i_{2}, \ldots, i_{t})}_{k})X$ and $Y^{T}Y>X^{T}X$. Then
$$\displaystyle \kappa(U)\leq\frac{Y^{T}Q(U)Y}{Y^{T}Y}<\frac{X^{T}Q(\mathcal {C}^{(T_{1}, T_{2},
\ldots, T_{t};i_{1}, i_{2}, \ldots, i_{t})}_{k})X}{X^{T}X}=\kappa(\mathcal {C}^{(T_{1}, T_{2}, \ldots, T_{t};
i_{1}, i_{2}, \ldots, i_{t})}_{k}).$$ This is a contradiction because $U\cong \mathcal {C}^{(T_{1}, T_{2},
\ldots, T_{t};i_{1}, i_{2}, \ldots, i_{t})}_{k}$. Then the result follows. \ \ \ \ \ $\square\square$

\end{proof}

\begin{lemma}{\bf (\cite{YGX})}\label{le2,11} 
Let $3\leq s\leq n-2$ be odd, and let both $C_{s,\, l}^{\ast}$ and $C_{s,\, l+1}^{\ast}$ be of order $n$. Then
$\kappa(C_{s,\, l+1}^*)< \kappa(C_{s,\, l}^*).$
\end{lemma}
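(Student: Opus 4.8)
The plan is to realize $C_{s,l}^{\ast}$ and $C_{s,l+1}^{\ast}$ as two coalescences of one and the same nonbipartite graph with one and the same star, differing only in which vertex of the nonbipartite part plays the role of the root, and then to invoke Lemma~\ref{le2,3}. Let $w$ denote the vertex $v_{s+l}$ at the far end of the path $P_{l+1}$ in $C_{s,l}^{\ast}$, fix one pendant vertex $z$ attached to $w$, and let $H$ be the graph consisting of the cycle $C_s$, the path $v_1v_{s+1}\cdots v_{s+l}=w$, and the single pendant edge $wz$. Then $H$ is connected and nonbipartite, since it contains the odd cycle $C_s$, and $z,w$ are distinct vertices of $H$. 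Set $T=K_{1,\,n-s-l-1}$, the star with center $u$; this is a nontrivial tree exactly when $n-s-l\ge 2$, and in the remaining case $n-s-l\le 1$ the graphs $C_{s,l}^{\ast}$ and $C_{s,l+1}^{\ast}$ coincide, so there is nothing to prove. Reading off the definition of $C_{s,\,l}^{\ast}$ one checks that $C_{s,l}^{\ast}=H(w)\diamond T(u)$ and $C_{s,l+1}^{\ast}=H(z)\diamond T(u)$: coalescing the star center with $w$ restores the $n-s-l$ pendant neighbours of $w$, whereas coalescing it with $z$ extends the attached path by the edge $wz$ and leaves $n-s-l-1$ pendant vertices hanging at $z$.

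By Lemma~\ref{le2,3}, applied with $G_1=H$, the tree $T$ above, and the two distinguished vertices of $G_1$ taken to be $z$ and $w$, it now suffices to exhibit a unit eigenvector $X$ of $C_{s,l}^{\ast}$ corresponding to $\kappa(C_{s,l}^{\ast})$ with $|x(z)|>|x(w)|$. To this end, observe that $C_{s,l}^{\ast}=\mathcal{C}^{(T';1)}_{s}$, where $T'$ is the tree obtained from the path $v_1v_{s+1}\cdots v_{s+l}$ by hanging the $n-s-l$ pendant vertices at its end $v_{s+l}$ (so $T'$ is a star centered at $v_1$ when $l=0$), and $T'$ is attached to the cycle at $v_1$; note $T'$ is nontrivial since $s\le n-2$. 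Since $s\le n-2<n$, Lemma~\ref{le2,7} gives $|x(v_1)|>0$, and hence, by Lemma~\ref{le2,1}, $T'$ is a nonzero tree branch of $C_{s,l}^{\ast}$ with root $v_1$. Because $w$ lies on the unique path in $T'$ from the root $v_1$ to the pendant vertex $z$, Lemma~\ref{le2,4} yields $|x(w)|<|x(z)|$, which is exactly the hypothesis needed. Feeding this back into Lemma~\ref{le2,3} produces $\kappa(C_{s,l+1}^{\ast})<\kappa(C_{s,l}^{\ast})$, as claimed.

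I expect the substantive content to be just two verifications, both routine given the machinery already assembled: first, that the two coalescence decompositions above are correct, so that Lemma~\ref{le2,3} applies verbatim (the hypotheses that $z,w$ are distinct vertices of the nonbipartite part $H$ and that $T$ is nontrivial are immediate once $n-s-l\ge 2$); and second, that the tree branch $T'$ is nonzero with respect to the eigenvector $X$, so that the strict monotonicity in Lemma~\ref{le2,4} is available — this second point is precisely what Lemma~\ref{le2,7} supplies. No further ideas beyond the stated lemmas appear to be required; the only thing to watch is the harmless boundary case $n-s-l\le 1$ noted above.
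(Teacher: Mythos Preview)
The paper does not supply its own proof of this lemma: it is quoted verbatim from \cite{YGX} and used as a black box. So there is no in-paper argument to compare against.

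That said, your proof is correct and is in fact the natural argument one would expect in \cite{YGX}, since it uses exactly the perturbation machinery (Lemmas~\ref{le2,3}, \ref{le2,4}, \ref{le2,7}) developed in that circle of papers. The two coalescence identifications $C_{s,l}^{\ast}=H(w)\diamond T(u)$ and $C_{s,l+1}^{\ast}=H(z)\diamond T(u)$ are right, Lemma~\ref{le2,7} gives $|x(v_1)|>0$, and then Lemma~\ref{le2,4} forces $|x(z)|>|x(w)|$, so Lemma~\ref{le2,3} finishes. Two minor remarks: (i) you do not actually need to invoke Lemma~\ref{le2,1} separately, since $|x(v_1)|>0$ already makes $T'$ a nonzero branch by definition, which is all Lemma~\ref{le2,4} requires; (ii) in the boundary case $n-s-l=1$ the two graphs are indeed isomorphic, but then the strict inequality claimed in the lemma is false rather than vacuous --- this is a defect of the lemma's stated range, not of your argument, and the intended reading is clearly $n-s-l\ge 2$.
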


\setlength{\unitlength}{0.5pt}
\begin{center}
\begin{picture}(835,137)
\qbezier(22,80)(22,96)(39,107)\qbezier(39,107)(57,119)(82,119)\qbezier(82,119)(106,119)(124,107)
\qbezier(124,107)(142,96)(142,80)\qbezier(142,80)(142,63)(124,52)\qbezier(124,52)(106,40)(82,40)
\qbezier(82,41)(57,41)(39,52)\qbezier(39,52)(22,63)(22,80)
\put(141,79){\circle*{4}}
\put(197,79){\circle*{4}}
\qbezier(141,79)(169,79)(197,79)
\put(133,61){\circle*{4}}
\put(734,111){\circle*{4}}
\put(167,80){\circle*{4}}
\put(23,85){\circle*{4}}
\put(26,67){\circle*{4}}
\put(119,50){\circle*{4}}
\put(123,108){\circle*{4}}
\put(135,95){\circle*{4}}
\put(228,79){\circle*{4}}
\put(219,79){\circle*{4}}
\put(209,79){\circle*{4}}
\put(39,54){\circle*{4}}
\put(32,101){\circle*{4}}
\put(239,79){\circle*{4}}
\put(265,125){\circle*{4}}
\qbezier(239,79)(252,102)(265,125)
\put(290,79){\circle*{4}}
\qbezier(239,79)(264,79)(290,79)
\put(282,99){\circle*{4}}
\put(277,108){\circle*{4}}
\put(271,117){\circle*{4}}
\put(119,77){$v_{1}$}
\put(133,103){$v_{2}$}
\put(112,115){$v_{3}$}
\put(28,81){$v_{k+1}$}
\put(10,105){$v_{k}$}
\put(-19,64){$v_{k+2}$}
\put(0,42){$v_{k+3}$}
\put(120,36){$v_{2k}$}
\put(139,53){$v_{2k+1}$}
\put(149,86){$v_{2k+2}$}
\put(182,65){$v_{2k+3}$}
\put(286,90){\circle*{4}}
\qbezier(239,79)(262,85)(286,90)
\put(222,89){$v_{a}$}
\put(290,86){$v_{n-1}$}
\put(261,130){$v_{a+1}$}
\put(296,68){$v_{n}$}
\put(609,83){\circle*{4}}
\put(543,82){\circle*{4}}
\qbezier(337,87)(371,87)(405,87)
\qbezier(337,74)(370,74)(404,74)
\qbezier(390,96)(405,88)(421,79)
\qbezier(421,79)(405,73)(389,67)
\put(561,82){\circle*{4}}
\put(552,82){\circle*{4}}
\put(443,108){\circle*{4}}
\put(443,55){\circle*{4}}
\qbezier(443,108)(443,82)(443,55)
\put(490,82){\circle*{4}}
\qbezier(443,108)(466,95)(490,82)
\qbezier(443,55)(466,69)(490,82)
\put(533,82){\circle*{4}}
\qbezier(490,82)(511,82)(533,82)
\put(514,92){$v_{k-1}$}
\put(487,90){$v_{k}$}
\put(430,117){$v_{k+1}$}
\put(434,38){$v_{k+2}$}
\put(571,83){\circle*{4}}
\put(650,83){\circle*{4}}
\qbezier(571,83)(610,83)(650,83)
\put(563,91){$v_{2}$}
\put(601,91){$v_{1}$}
\put(627,91){$v_{2k+2}$}
\put(681,83){\circle*{4}}
\put(671,83){\circle*{4}}
\put(661,83){\circle*{4}}
\put(693,84){\circle*{4}}
\put(744,84){\circle*{4}}
\qbezier(693,84)(718,84)(744,84)
\put(720,135){\circle*{4}}
\qbezier(693,84)(706,110)(720,135)
\put(740,101){\circle*{4}}
\qbezier(693,84)(716,93)(740,101)
\put(729,119){\circle*{4}}
\put(725,126){\circle*{4}}
\put(681,68){$v_{a}$}
\put(715,142){$v_{a+1}$}
\put(745,101){$v_{n-1}$}
\put(750,80){$v_{n}$}
\put(750,63){\circle*{4}}
\qbezier(693,84)(721,74)(750,63)
\put(745,37){\circle*{4}}
\put(747,45){\circle*{4}}
\put(749,53){\circle*{4}}
\put(741,28){\circle*{4}}
\qbezier(693,84)(717,56)(741,28)
\put(756,57){$v_{k+3}$}
\put(743,14){$v_{2k+1}$}
\put(159,11){$C_{2k+1,\, l}^{\ast}$}
\put(573,13){$C_{3,\, t}^*$}
\put(300,-25){Fig. 2.2. $C_{2k+1,\, l}^{\ast},\ C_{3,\, t}^*$}
\end{picture}
\end{center}

\begin{lemma}{\bf (\cite{YGX})}\label{le2,13} 
For $k\ge 1$, let both $C_{2k+1,\, l}^{\ast}$ and $C_{3,\, t}^*$ ($t=l+k-1$) be of order $n$. Then
$\kappa(C_{3,\, t}^*)\leq \kappa(C_{2k+1,\, l}^*)$, with equality if and only if $k=1$ (see Fig. 2.2).
\end{lemma}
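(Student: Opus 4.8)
\medskip

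\noindent\emph{Plan of proof.} The argument runs by induction on $k$; the inductive step is reduced to a single cycle-contraction inequality together with one application of Lemma~\ref{le2,11}. For $k=1$ one has $t=l+k-1=l$ and $C_{2k+1,\,l}^{\ast}=C_{3,\,l}^{\ast}=C_{3,\,t}^{\ast}$, so the stated inequality is an equality; since every contraction step below is followed by the strict Lemma~\ref{le2,11}, this turns out to be the only equality case. So fix $k\ge2$, assume the statement for $k-1$, and reduce everything to proving
\[ \kappa(C_{2k-1,\,l}^{\ast})\ \le\ \kappa(C_{2k+1,\,l}^{\ast}) \qquad (\text{both of order } n). \]
Granting this, Lemma~\ref{le2,11} gives $\kappa(C_{2k-1,\,l+1}^{\ast})<\kappa(C_{2k-1,\,l}^{\ast})$, and the induction hypothesis applied to $C_{2k-1,\,l+1}^{\ast}$ (note $(l+1)+(k-1)-1=l+k-1=t$) gives $\kappa(C_{3,\,t}^{\ast})\le\kappa(C_{2k-1,\,l+1}^{\ast})$; chaining these three inequalities yields $\kappa(C_{3,\,t}^{\ast})<\kappa(C_{2k+1,\,l}^{\ast})$ for $k\ge2$, which completes both the induction and the equality claim.

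To prove the contraction inequality, let $X$ be a unit eigenvector of $G:=C_{2k+1,\,l}^{\ast}$ for $\kappa:=\kappa(G)$ (note $\kappa<1$ when $k\ge2$, e.g. since $\kappa\le q_{min}(C_{2k+1})<1$ by restricting to $G$ a cycle eigenvector vanishing at the attachment vertex). Write the odd cycle as $v_1v_2\cdots v_{2k+1}v_1$, let the ``broom'' branch $B$ (the path $P_{l+1}$ followed by the $n-2k-1-l$ pendants) be rooted at $v_1$, and let $u_l$ be the brush vertex carrying those pendants, with $u_0=v_1$. The cycle-reflection fixing $v_1$, extended by the identity on $B$, is an automorphism of $G$; hence the $\kappa$-eigenspace is carried to itself by the corresponding permutation matrix and splits into symmetric and antisymmetric parts, and an antisymmetric eigenvector vanishes at $v_1$, contradicting Lemma~\ref{le2,7}. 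So we may take $X$ symmetric, i.e. $x(v_i)=x(v_{2k+3-i})$ for all $i$, with $x(v_1)\ne0$. Two consequences will be used: the eigenequation at the antipodal vertex $v_{k+1}$ collapses (via $x(v_{k+2})=x(v_{k+1})$) to $x(v_k)=(\kappa-3)\,x(v_{k+1})$, and since $B$ is a nonzero branch, Lemma~\ref{le2,4} gives $|x(v_1)|<|x(u_1)|<\cdots<|x(u_l)|$ along the handle.

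Now let $G'$ be obtained from $G$ by deleting the three cycle edges $v_kv_{k+1}$, $v_{k+1}v_{k+2}$, $v_{k+2}v_{k+3}$, adding the edge $v_kv_{k+3}$ (which reknits a $(2k-1)$-cycle and frees $v_{k+1},v_{k+2}$), and re-attaching $v_{k+1}$ and $v_{k+2}$ as two extra pendants at $u_l$; one checks directly that $G'\cong C_{2k-1,\,l}^{\ast}$ and $|V(G')|=n$. Use the test vector $Y$ on $V(G')=V(G)$ that equals $X$ off $\{v_{k+1},v_{k+2}\}$ and equals the optimal pendant value $\tfrac{1}{\kappa-1}x(u_l)$ at $v_{k+1}$ and at $v_{k+2}$. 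Expanding $X^{T}Q(G)X=\kappa$ and $Y^{T}Q(G')Y$, cancelling the unchanged edges, and substituting $x(v_{k+2})=x(v_{k+1})$, $x(v_{k+3})=x(v_k)$ and $x(v_k)=(\kappa-3)x(v_{k+1})$, the desired bound $Y^{T}Q(G')Y\le\kappa\,Y^{T}Y$ reduces (using $X^{T}X=1$) to an inequality of the shape
\[ \kappa\,x(u_l)^2\ \ge\ (1-\kappa)(\kappa-3)(\kappa-4)\,x(v_{k+1})^2 , \]
that is, the squared brush entry must dominate a fixed positive multiple of the squared antipodal entry.

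The last inequality is where the real work lies: Lemma~\ref{le2,4} only compares magnitudes within a tree branch, so bounding a branch entry against a cycle entry needs a separate argument. The plan is to push the eigenequation $x(u_{j-1})+x(u_{j+1})=(\kappa-d_G(u_j))x(u_j)$ outward along $B$ (with the sign information from Lemma~\ref{le2,4}) to bound $|x(u_l)|$ from below, and to run the eigenequations around the cycle from $v_{k+1}$ back to $v_1$ to bound $|x(v_{k+1})|$ from above in terms of $|x(v_1)|\le|x(u_1)|$; morally, when $\kappa$ is small the eigenvector concentrates on the tree part of $G$ and is small on the odd cycle, and this is exactly what such estimates make precise. (An alternative, which I would also try, is to allow $Y$ to differ from $X$ at $v_k$ and $v_{k+3}$ as well, using the full reflection symmetry of the contracted cycle to avoid the brush-versus-antipode comparison altogether.) Once the contraction inequality is in hand the induction closes, and, as noted above, equality in the statement holds precisely for $k=1$.
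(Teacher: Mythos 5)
Your chain has one unproven link, and unfortunately that link is false, so the strategy cannot be repaired as stated. The base case, the use of Lemma~\ref{le2,11}, the symmetrization of the eigenvector, the identity $x(v_k)=(\kappa-3)x(v_{k+1})$, and the algebra reducing $Y^{T}Q(G')Y\le\kappa\,Y^{T}Y$ to $\kappa\,x(u_l)^2\ \ge\ (1-\kappa)(3-\kappa)(4-\kappa)\,x(v_{k+1})^2$ are all correct; but you only sketch a plan for this last inequality, and in fact both it and the contraction inequality $\kappa(C_{2k-1,\,l}^{\ast})\le\kappa(C_{2k+1,\,l}^{\ast})$ it was meant to certify fail in general. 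Take $k=2$, $l=0$ (so $u_l=v_1$) and many pendants: eliminating the eigenvalue equations for the symmetric least eigenvector gives $\kappa(C_{3,\,0}^{\ast})\approx 4/(3n)$ while $\kappa(C_{5,\,0}^{\ast})\approx 4/(5n)$; concretely, for $n=10$ one finds $\kappa(C_{3,\,0}^{\ast})\approx 0.142$ but $\kappa(C_{5,\,0}^{\ast})\approx 0.108$. So shrinking the cycle and dumping the two freed vertices as pendants at the brush vertex can strictly \emph{increase} the least $Q$-eigenvalue, and your induction step already fails at its first application when $l=0$. Consistently, in that example $x(u_l)=x(v_1)=(\kappa^{2}-5\kappa+5)\,x(v_{3})\approx 4.5\,x(v_{3})$, so your required inequality becomes $\kappa(\kappa^{2}-5\kappa+5)^{2}\ge(1-\kappa)(3-\kappa)(4-\kappa)$, i.e.\ roughly $2.2\ge 10$, which is false.

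The obstruction is structural rather than technical: the hypothesis $t=l+k-1$ encodes that cycle vertices must be traded for \emph{handle} length, not for pendants. A longer handle strictly decreases $\kappa$ (Lemma~\ref{le2,11}), whereas a longer odd cycle is itself ``more nearly bipartite'' ($q_{\min}(C_{2k+1})=2-2\cos\frac{\pi}{2k+1}$ decreases in $k$), so extra pendants at $u_l$ do not compensate for losing two cycle vertices. An induction of your type would need the one-step comparison $\kappa(C_{2k-1,\,l+1}^{\ast})\le\kappa(C_{2k+1,\,l}^{\ast})$ (cycle shorter by two, handle longer by one, one extra pendant), which requires a genuinely different test-vector or eigenvector-transfer argument than the one you set up; note also that the paper itself does not prove this lemma but quotes it from \cite{YGX}, so your proposal cannot be salvaged by matching it against an in-paper argument.
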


\section{Domination number and the structure of a graph}

\begin{theorem}\label{th3,1} 
Let $G$ be a nonbipartite graph with domination number $\gamma(G)$. Then $G$ contains a unicyclic spanning subgraph $H$
with both $g(H)=g_{o}(G)$ and $\gamma(G)=\gamma(H)$.
\end{theorem}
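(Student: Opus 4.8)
The plan is to start from the nonbipartite graph $G$, take a shortest odd cycle $C$ of $G$ (so $l(C)=g_o(G)$), and build a spanning unicyclic subgraph $H$ whose only cycle is $C$; the issue is to do the ``removal of extra edges'' in a way that preserves a minimum dominating set. First I would fix a minimum dominating set $D$ of $G$, with $|D|=\gamma(G)$. The natural object to consider is a spanning subgraph $H_0$ consisting of $C$ together with a breadth-first (shortest-path) forest rooted at the vertices of $C$: concretely, take $T$ to be a spanning tree of $G$ chosen so that every vertex is joined to $V(C)$ by a path of length $d_G(v,V(C))$, and then set $H = C \cup T'$ where $T'$ is $T$ with any edge inside $V(C)$ discarded. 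Such an $H$ is connected, spanning, and has exactly $n$ edges (the tree contributes $n-1$ and the cycle closes up one extra edge, after we make sure the tree does not already use a chord of $C$), hence unicyclic with $g(H)=l(C)=g_o(G)$. The real content is that $\gamma(H)=\gamma(G)$; since $H\subseteq G$ we automatically have $\gamma(H)\ge\gamma(G)$, so it remains to produce a dominating set of $H$ of size $\gamma(G)$.

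For that direction I would not try to keep $D$ itself dominating in $H$ — that can fail — but instead argue that $H$ can be chosen to admit \emph{some} dominating set of size $\gamma(G)$. The cleaner route is to build the spanning tree adaptively from $D$: let $D=\{u_1,\dots,u_\gamma\}$ and note $V(G)=\bigcup_i N_G[u_i]$. Assign each $v\notin D$ to one $u_i$ with $v\in N_G(u_i)$, which gives a spanning forest of stars; call it $F$. This $F$ has $D$ as a dominating set and has $n-\gamma$ edges, but it need not be connected and need not contain an odd cycle. Now add edges of $G$ to $F$ to first connect the stars into a spanning tree (each added edge merges two components, never creating a cycle), obtaining a spanning tree $T_D$ of $G$ that still has $D$ as a dominating set because adding edges only helps. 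Finally, since $G$ is nonbipartite, $T_D$ has a non-edge $e=xy$ of $T_D$ (an edge of $G\setminus T_D$) such that the fundamental cycle of $e$ in $T_D$ is odd — indeed at least one edge of $G$ outside $T_D$ closes an odd cycle, for otherwise every edge of $G$ would close an even cycle and a standard argument (2-color $T_D$ properly, check all edges) would make $G$ bipartite. Add such an $e$: the resulting $H:=T_D+e$ is unicyclic, spanning, still dominated by $D$, so $\gamma(H)\le|D|=\gamma(G)$, giving $\gamma(H)=\gamma(G)$.

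The only remaining point is $g(H)=g_o(G)$: the unique cycle of $H$ is odd (we chose $e$ to close an odd fundamental cycle), and its length is at least $g_o(G)$ by definition; to get equality I would, among all pairs $(T_D,e)$ producing an odd unicyclic spanning subgraph dominated by $D$, choose one minimizing the length of the cycle, and then check that this minimum equals $g_o(G)$. This last equality is where I expect the main friction: it is not automatic that the ``star-forest plus connector'' construction can be routed so that its fundamental odd cycle is a \emph{shortest} odd cycle of $G$. I would handle it by a swap/exchange argument: start with a shortest odd cycle $C^*$ of $G$, put the vertices of $C^*$ into the construction first (adding $V(C^*)\setminus D$ to $D$ only if forced, but in fact one can always enlarge $D$ to a minimum-size set still of size $\gamma$ that interacts well with $C^*$, or alternatively route the star-forest so that $C^*$'s edges are available), build $T_D$ to contain all edges of $C^*$ except one, and take $e$ to be that last edge of $C^*$; then the fundamental cycle is exactly $C^*$, of length $g_o(G)$, and $D$ still dominates. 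Verifying that $T_D$ can be taken to contain $C^* - e$ while keeping $D$ dominating and $T_D$ acyclic is the technical heart, and amounts to checking that the path-forest from $D$ can be grown avoiding the chord we want to keep free — a routine but careful spanning-tree surgery.
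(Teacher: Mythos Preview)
Your star-forest construction is a clean route to a nonbipartite unicyclic spanning subgraph $H$ with $\gamma(H)=\gamma(G)$: assigning each $v\notin D$ to one $D$-neighbour gives a spanning forest dominated by $D$; extending to a spanning tree $T_D$ and then adding an edge that closes an odd fundamental cycle finishes that part. This much is correct and arguably tidier than the paper's incremental build-up from the cycle outward.

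The genuine gap is exactly the step you flag yourself, forcing $g(H)=g_o(G)$. Calling it ``routine but careful spanning-tree surgery'' understates the difficulty. Here is a concrete obstruction to your plan of fitting $C^*-e$ into $T_D$. Take $g_o(G)\ge 5$, suppose $V(C^*)\cap D=\emptyset$, and suppose two vertices $v_i,v_j\in V(C^*)$ at distance~$2$ on $C^*$ have a common vertex $u\in D\setminus V(C^*)$ as their \emph{only} neighbour in $D$. Then in every star forest you are forced to set $\phi(v_i)=\phi(v_j)=u$, and for every edge $e$ of $C^*$ the graph $F\cup(C^*-e)$ contains the cycle formed by $v_iu$, $uv_j$, and whichever $v_i$--$v_j$ arc of $C^*$ avoids $e$. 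So no choice of $e$ makes $F\cup(C^*-e)$ acyclic, and the surgery fails for this particular $C^*$.

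The paper confronts precisely this obstruction, and its resolution is the substantive part of the proof. One first chooses the shortest odd cycle $\mathcal C$ to \emph{maximise} $|V(\mathcal C)\cap D|$; in the situation above, $v_iuv_j$ together with the long arc of $C^*$ is another shortest odd cycle, now meeting $D$, so the maximising choice discards the bad $C^*$. One then proves (this is Claim~1 in the paper, via two assertions exploiting chordlessness of shortest odd cycles and the extremal choice of $\mathcal C$) that every $D$-vertex outside $\mathcal C$ that is still needed to dominate $\mathcal C$ is adjacent to only one vertex of $\mathcal C$ --- exactly what makes the attachments injective and keeps the construction unicyclic. Your sketch is missing both the extremal choice of $C^*$ and this structural lemma; with them, your star-forest framework could be completed, but as written the ``routine'' step is where the real work hides. (Two small asides: the phrase ``enlarge $D$ to a minimum-size set still of size $\gamma$'' is self-contradictory, and a shortest odd cycle has no chords, so ``avoiding the chord we want to keep free'' is not the right picture of the difficulty.)
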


\begin{proof}
Denote by $D$ a dominating set of $G$ with cardinality $\gamma(G)$. We denote by $\mathcal {C}$ a cycle with length
$g_{o}(G)$ in $G$ which contains the largest number of vertices of $D$ among all cycles with length $g_{o}(G)$.
It is easy to see that there is no chord in $\mathcal {C}$ because otherwise, there is an odd cycle with length less
than $g_{o}(G)$.

Let $U=\{M\,|\, M$ be a subset of $D$ with the minimum cardinality among all the subsets of $D$ which
dominates $\mathcal {C}\}$, and let $D_{\mathcal {C}}\in U$ be a subset contains the maximum number of $\mathcal {C}$
among all the subsets in $U$. Let $S=D_{\mathcal {C}}\backslash V(\mathcal {C})$,
and let $F^{'}_{1}=G[V(\mathcal {C})\cup D_{\mathcal {C}}]-E(G[S])$.

{\bf Claim 1} If $S\neq \emptyset$,
then in $F^{'}_{1}$, every vertex in $S$ is pendent vertex. Otherwise,
suppose that there exists a vertex $u\in S$ which is adjacent to at least two vertices of $\mathcal {C}$, and suppose
$\mathcal {C}=v_{1}v_{2}\cdots v_{z}v_{1}$ (where $z=g_{o}(G)$). If $u$ is adjacent to two adjacent vertices of
$\mathcal {C}$,
say $v_{1}$, $v_{2}$ for convenience, then $g_{o}(G)=3$, and in $\mathcal {C}$, $z=3$. Now, we say that
$v_{3}$ is in
$D$ because otherwise, $\mathcal {C}^{'}=v_{1}v_{2}uv_{1}$ is a cycle containing more vertices of $D$
than $\mathcal {C}$, which contradicts the choice of $\mathcal {C}$. Note that both $v_{1}$ and $v_{2}$ are
dominated by $v_{3}$. $v_{3}\in D$ means that $|D_{\mathcal {C}}|=1$.
Noting the choice of $D_{\mathcal {C}}$, we get that $D_{\mathcal {C}}$ contains only one vertex
of $\mathcal {C}$ which is in $D$ ($D_{\mathcal {C}}=\{v_{3}\}$ possibly). Then $S= \emptyset$, which contradicts our
assumption that $S\neq \emptyset$. As a result, we get that if $S\neq \emptyset$,
then no vertex in $S$ is adjacent to two adjacent vertices of $\mathcal {C}$. This tells us that if $S\neq \emptyset$
and there exists a vertex in $S$ which is adjacent to at least two vertices of $\mathcal {C}$,
then the length of
$\mathcal {C}$ is at least $5$, that is, $g_{o}(G)\geq5$.

{\bf Assertion 1} If there exists a vertex in $S$ which is adjacent to two nonadjacent vertices of
$\mathcal {C}$, then one of the two paths
obtained by parting $\mathcal {C}$ with the two nonadjacent vertices is with length $2$.
To prove this assertion, we suppose that a vertex $u\in S$ is adjacent to
two vertices of $\mathcal {C}$, say $v_{\alpha}$, $v_{\beta}$.
Then $\mathcal {C}$ is partitioned into two path $P_{1}$ and $P_{2}$ by $v_{\alpha}$ and $v_{\beta}$,
 that is, $\mathcal {C}=P_{1}\cup P_{2}$, where $v_{\alpha}$, $v_{\beta}$ are the end vertices
of both $P_{1}$ and $P_{2}$ (see Fig. 3.1).

Assume this assertion can not hold. Then both $l(P_{1})$ and $l(P_{2})$ are more than $2$.
Note that one of $l(P_{1})$, $l(P_{2})$ is odd. Suppose $l(P_{1})$ is odd for convenience. Let $P^{'}=v_{\alpha}uv_{\beta}$.
Then $\mathcal {C}^{'}=P_{1}\cup P^{'}$ is an odd cycle with length less than $\mathcal {C}$,
which contradicts the choice that $l(\mathcal {C})=g_{o}(G)$. Assertion 1 is proved.

\setlength{\unitlength}{0.5pt}
\begin{center}
\begin{picture}(531,175)
\put(397,89){\circle*{4}}
\put(437,154){\circle*{4}}
\put(408,56){\circle*{4}}
\put(406,123){\circle*{4}}
\put(432,34){\circle*{4}}
\put(333,83){\circle*{4}}
\qbezier(333,83)(365,86)(397,89)
\put(340,-15){Fig. 3.2. $\mathcal {C}$ and $\xi$}
\put(490,69){$\mathcal {C}$}
\put(401,88){$v_{\alpha}$}
\put(412,119){$v_{0}$}
\put(430,165){$v_{\beta}$}
\put(413,55){$u_{0}$}
\put(423,18){$v_{\sigma}$}
\put(315,80){$\xi$}
\put(530,118){$\mathcal {P}^{''}$}
\qbezier(397,92)(397,64)(416,44)\qbezier(416,44)(436,25)(464,25)\qbezier(464,25)(491,25)(511,44)
\qbezier(511,44)(531,64)(531,92)\qbezier(397,92)(397,119)(416,139)\qbezier(416,139)(436,159)(464,159)
\qbezier(464,159)(491,159)(511,139)\qbezier(511,139)(531,119)(531,92)
\qbezier(333,83)(334,158)(437,154)
\qbezier(333,83)(336,23)(432,34)
\qbezier(63,88)(63,64)(79,47)\qbezier(79,47)(96,31)(120,31)\qbezier(120,31)(143,31)(160,47)\qbezier(160,47)(177,64)(177,88)
\qbezier(63,88)(63,111)(79,128)\qbezier(79,128)(96,145)(120,145)\qbezier(120,145)(143,145)(160,128)
\qbezier(160,128)(177,111)(177,88)
\put(78,126){\circle*{4}}
\put(80,48){\circle*{4}}
\put(15,76){\circle*{4}}
\qbezier(15,76)(11,127)(78,126)
\qbezier(15,76)(25,40)(80,48)
\put(62,137){$v_{\alpha}$}
\put(68,30){$v_{\beta}$}
\put(0,72){$u$}
\put(67,85){$P_{1}$}
\put(175,108){$P_{2}$}
\put(137,63){$\mathcal {C}$}
\put(2,-15){Fig. 3.1. $P_{1}$ and $P_{2}$}
\end{picture}
\end{center}

{\bf Assertion 2} No vertex in $S$ is adjacent to more than $2$ vertices of $\mathcal {C}$. Otherwise, assume that
there exists a vertex $\xi\in S$ which is adjacent to $3$ vertices of $\mathcal {C}$, say $v_{\alpha}$, $v_{\beta}$, $v_{\sigma}$
for convenience. Suppose $\mathcal {C}$ is parted into two paths $P_{1}$, $P_{2}$ by
$v_{\alpha}$, $v_{\beta}$. By Assertion 1, we know that one of
$l(P_{1})$, $l(P_{2})$ is $2$. For convenience, we assume that $l(P_{1})=2$, and
assume that $P_{1}=v_{\alpha}v_{0}v_{\beta}$ (see Fig. 3.2). Noting that no vertex in $S$ is
adjacent to two adjacent vertices of $\mathcal {C}$, we see that
$v_{\sigma}\in V(P_{2})$. Suppose that $\mathcal {C}$ is parted into two paths $\mathcal {P}_{1}$,
$\mathcal {P}_{2}$ by $v_{\alpha}$, $v_{\sigma}$. Then one of $\mathcal {P}_{1}$,
$\mathcal {P}_{2}$ contains $P_{1}$. For convenience, we assume that $\mathcal {P}_{1}$
contains $P_{1}$. Then $l(\mathcal {P}_{1})> 2$ (in fact, $l(\mathcal {P}_{1})\geq 4$).

By Assertion 1, we know
that $l(\mathcal {P}_{2})= 2$. Assume that
$\mathcal {P}_{2}=v_{\alpha}u_{0}v_{\sigma}$, and assume that $\mathcal {C}$ is parted into two paths
$\mathcal {P}^{'}$,
$\mathcal {P}^{''}$ by $v_{\beta}$, $v_{\sigma}$, where
$\mathcal {P}^{'}=P_{1}\cup \mathcal {P}_{2}$ (see Fig. 3.2). Because $l(\mathcal {P}^{'})=4$,
$l(\mathcal {P}^{''})$ is odd. Let $P=v_{\beta}\xi v_{\sigma}$. Then $P\cup \mathcal {P}^{''}$ is an odd cycle
with length less than $\mathcal {C}$, which contradicts the choice that $l(\mathcal {C})=g_{o}(G)$. Assertion 2 is proved.

The above two assertions tell us that a vertex in $S$ is adjacent to at most two vertices of
$\mathcal {C}$. Suppose that the vertex $\eta\in S$ is adjacent to two nonadjacent vertices $\omega_{1}$ and $\omega_{2}$
of $\mathcal {C}$. Then $\mathcal {C}$ is parted into two paths by $\omega_{1}$ and $\omega_{2}$.
Denote by $\mathbb{P}_{1}$ and $\mathbb{P}_{2}$ the two paths.
By Assertion 1, we know that one of $\mathbb{P}_{1}$, $\mathbb{P}_{2}$ is with length 2. Suppose that $l(\mathbb{P}_{1})=2$,
and suppose $\mathbb{P}_{1}=\omega_{1}\omega_{0}\omega_{2}$. Let $\mathbb{P}^{'}=\omega_{1}\eta\omega_{2}$. We say that
$\omega_{0}\in D$. Otherwise, $\mathbb{P}^{'}\cup \mathbb{P}_{2}$ is a cycle of length $g_{o}(G)$ which contains more
vertices of $D$ than $\mathcal {C}$, which contradicts the choice of $\mathcal {C}$. Let
$D^{'}_{\mathcal {C}}=D_{\mathcal {C}}\backslash \{\eta\}$. We find that $D^{'}_{\mathcal {C}}$ also dominates $\mathcal {C}$,
but this contradicts the choice of $D_{\mathcal {C}}$. This means that no vertex in $S$ is adjacent to two vertices of
$\mathcal {C}$.

From above all, we conclude that no vertex in $S$ is adjacent to more than one vertex of
$\mathcal {C}$. It means that every vertex in $S$ is pendent vertex in $F^{'}_{1}$. Claim 1 is proved.

Let $F=G[D\backslash V(F^{'}_{1})]$, and $F_{2}$, $\ldots$, $F_{k}$ be all the connected components of $F$.
By deleting edges, we can get $F^{'}_{2}$, $\ldots$, $F^{'}_{k}$ from $F_{2}$, $\ldots$, $F_{k}$ respectively,
such that $F^{'}_{2}$, $\ldots$, $F^{'}_{k}$ are all the trees. Suppose
$d_{G}(F^{'}_{1}$, $F^{'}_{2})=\min\{d_{G}(F^{'}_{1}$, $F^{'}_{i})\, |\, 2\leq i\leq k\}$.
Denote by $P_{F^{'}_{1}, F^{'}_{2}}$  one of the shortest paths from $F^{'}_{1}$ to $F^{'}_{2}$ in $G$, that is,
$P_{F^{'}_{1}, F^{'}_{2}}$ satisfies $l(P_{F^{'}_{1}, F^{'}_{2}})=d_{G}(F^{'}_{1}$, $F^{'}_{2})$.
Noting the assumption of $d_{G}(F^{'}_{1}$, $F^{'}_{2})$ and the choice of $P_{F^{'}_{1}, F^{'}_{2}}$,
we get that except the two end vertices of $P_{F^{'}_{1}, F^{'}_{2}}$,
all other vertices of
$P_{F^{'}_{1}, F^{'}_{2}}$ are in $V(G)\setminus V(F)$. Let $P_{F^{'}_{1}, F^{'}_{2}}=v_{i_{1}}v_{i_{2}}\cdots v_{i_{t}}$,
where $v_{i_{1}}\in V(F^{'}_{1})$,
$v_{i_{t}}\in V(F^{'}_{2})$.

\setlength{\unitlength}{0.5pt}
\begin{center}
\begin{picture}(437,206)
\qbezier(0,76)(0,98)(21,114)\qbezier(21,114)(43,130)(74,130)\qbezier(74,130)(104,130)(126,114)
\qbezier(126,114)(148,98)(148,76)\qbezier(148,76)(148,53)(126,37)\qbezier(126,37)(104,21)(74,21)
\qbezier(74,22)(43,22)(21,37)\qbezier(21,37)(0,53)(0,76)
\qbezier(283,70)(283,90)(305,105)\qbezier(305,105)(328,119)(360,119)\qbezier(360,119)(391,119)(414,105)
\qbezier(414,105)(437,90)(437,70)\qbezier(437,70)(437,49)(414,34)\qbezier(414,34)(391,20)(360,20)
\qbezier(360,21)(328,21)(305,34)\qbezier(305,34)(283,49)(283,70)
\qbezier(208,169)(208,184)(224,195)\qbezier(224,195)(241,206)(266,206)\qbezier(266,206)(290,206)(307,195)
\qbezier(307,195)(324,184)(324,169)\qbezier(324,169)(324,153)(307,142)\qbezier(307,142)(290,132)(266,132)
\qbezier(266,132)(241,132)(224,142)\qbezier(224,142)(207,153)(208,169)
\put(304,75){\circle*{4}}
\put(232,75){\circle*{4}}
\put(249,75){\circle*{4}}
\put(116,76){\circle*{4}}
\put(156,76){\circle*{4}}
\qbezier(116,76)(136,76)(156,76)
\put(190,76){\circle*{4}}
\qbezier(156,76)(173,76)(190,76)
\put(266,75){\circle*{4}}
\qbezier(304,75)(285,75)(266,75)
\put(214,75){\circle*{4}}
\put(256,152){\circle*{4}}
\qbezier(190,76)(223,114)(256,152)
\put(45,70){$F^{'}_{1}$}
\put(366,66){$F^{'}_{2}$}
\put(263,179){$F^{'}_{a}$}
\put(102,87){$v_{i_{1}}$}
\put(150,60){$v_{i_{2}}$}
\put(186,60){$v_{i_{3}}$}
\put(310,76){$v_{i_{t}}$}
\put(260,157){$v_{i_{b}}$}
\put(152,-9){Fig. 3.3. $P_{F^{'}_{1}, F^{'}_{2}}$}
\end{picture}
\end{center}

{\bf Claim 2} $l(P_{F^{'}_{1}, F^{'}_{2}})\leq 3$, that is $t\leq 4$. Otherwise, suppose that $l(P_{F^{'}_{1}, F^{'}_{2}})\geq 4$,
that is, $t\geq5$ (see Fig. 3.3). Note that $v_{i_{3}}$
must be adjacent to at least a vertex in $D$. Suppose $v_{i_{3}}$ is adjacent to a vertex $v_{i_{b}}$ of $F^{'}_{a}$
where $a\neq 1$, $2$. Then $d_{G}(F^{'}_{1},
F^{'}_{a})<d_{G}(F^{'}_{1},
F^{'}_{2})$. This contradicts that $d_{G}(F^{'}_{1}$,
$F^{'}_{2})=\min\{d_{G}(F^{'}_{1}$, $F^{'}_{i})\, |\, 2\leq i\leq k\}$. Hence, Claim 2 holds.

{\bf Case 1} $v_{i_{2}}$ is dominated by $V(F^{'}_{1}\cup F^{'}_{2})\cap D$. We assume
that $v_{i_{2}}$ is dominated by $v_{i_{0}}\in (V(F^{'}_{1}\cup F^{'}_{2})\cap D)$. If $v_{i_{0}}\in V(F^{'}_{1})$ and
$v_{i_{0}}\neq v_{i_{1}}$, we let
$P^{'}_{F^{'}_{1}, F^{'}_{2}}=v_{i_{0}}v_{i_{2}}\cdots v_{i_{t}}$, and let
$\mathcal {F}_{1}=F^{'}_{1}\cup P^{'}_{F^{'}_{1}, F^{'}_{2}}\cup F^{'}_{2}$; if $v_{i_{0}}= v_{i_{1}}$, we let
$\mathcal {F}_{1}=F^{'}_{1}\cup P_{F^{'}_{1}, F^{'}_{2}}\cup F^{'}_{2}$. If $v_{i_{0}}\in V(F^{'}_{2})$,
then $v_{i_{3}}=v_{i_{0}}$, $P_{F^{'}_{1}, F^{'}_{2}}=v_{i_{1}}v_{i_{2}}v_{i_{3}}$,
and then we let $\mathcal {F}_{1}=F^{'}_{1}\cup P_{F^{'}_{1}, F^{'}_{2}}\cup F^{'}_{2}$.

{\bf Case 2} $v_{i_{2}}$ is not dominated by $V(F^{'}_{1}\cup F^{'}_{2})\cap D$. Then $v_{i_{2}}$ must
be dominated by $D\backslash (V(F^{'}_{1}\cup F^{'}_{2})\cap D)$. That is, $v_{i_{2}}$ is dominated by
$\displaystyle V(\cup_{i=3}^{k}F^{'}_{i})$. Then $v_{i_{2}}\in \displaystyle V(\cup_{i=3}^{k}F^{'}_{i})$
or $v_{i_{2}}$ is adjacent to a vertex in $\displaystyle V(\cup_{i=3}^{k}F^{'}_{i})$.
If $v_{i_{2}}$ is adjacent to a vertex in $\displaystyle V(\cup_{i=3}^{k}F^{'}_{i})$, for convenience,
we assume that $v_{i_{2}}$ is dominated by a vertex $v_{i_{0}}$ in
$V(F^{'}_{3})$. Then we let
$\mathcal {F}_{1}=F^{'}_{1}\cup P_{F^{'}_{1}, F^{'}_{2}}\cup F^{'}_{2}\cup v_{i_{2}}v_{i_{0}}\cup F^{'}_{3}$.
If $v_{i_{2}}\in \displaystyle V(\cup_{i=3}^{k}F^{'}_{i})$, for convenience,
we assume that $v_{i_{2}}\in V(F^{'}_{3})$.
Then we let $\mathcal {F}_{1}=F^{'}_{1}\cup P_{F^{'}_{1}, F^{'}_{2}}\cup F^{'}_{2}\cup F^{'}_{3}$.

Now, $\mathcal {F}_{1}$ is a unicyclic graph and $D\cap V(\mathcal {F}_{1})$ is a
dominating set of $\mathcal {F}_{1}$.

For Case 1, suppose
$d_{G}(\mathcal {F}_{1}$, $F^{'}_{3})=\min\{d_{G}(\mathcal {F}_{1}$, $F^{'}_{i})\, |\, 3\leq i\leq k\}$. Denote by
$P_{\mathcal {F}_{1}, F^{'}_{3}}$ one of the shortest paths from $\mathcal {F}_{1}$ to $F^{'}_{3}$ in $G$. Similar to
the $P_{F^{'}_{1}, F^{'}_{2}}$, we can prove that $l(P_{\mathcal {F}_{1}, F^{'}_{3}})\leq 3$. Suppose
$P_{\mathcal {F}_{1}, F^{'}_{3}}=v_{j_{1}}v_{j_{2}}\cdots v_{j_{s}}$ where $s\leq 4$.

{\bf Subcase 1} $v_{j_{2}}$ is dominated by $V(\cup^{3}_{i=1} F^{'}_{3})\cap D$. For convenience,
we assume that $v_{j_{2}}$ is dominated by a vertex $v_{j_{0}}\in V(F^{'}_{1})$. If $v_{j_{0}}\neq v_{j_{1}}$,
we let $P^{'}_{\mathcal {F}_{1}, F^{'}_{3}}=v_{j_{0}}v_{j_{2}}\cdots v_{j_{t}}$, and we let
$\mathcal {F}_{2}=\mathcal {F}_{1}\cup P^{'}_{\mathcal {F}_{1}, F^{'}_{3}}\cup F^{'}_{3}$;
 if $v_{j_{0}}= v_{j_{1}}$, we let $\mathcal {F}_{2}=\mathcal {F}_{1}\cup P_{\mathcal {F}_{1}, F^{'}_{3}}\cup F^{'}_{3}$.

{\bf Subcase 2} $v_{j_{2}}$ is not dominated by $V(\cup^{3}_{i=1} F^{'}_{3})\cap D$. Then $v_{j_{2}}$ must
be dominated by $D\backslash (V(\cup^{3}_{i=1} F^{'}_{3})\cap D)$. That is, $v_{j_{2}}$ is dominated by
$\displaystyle V(\cup_{i=4}^{k}F^{'}_{i})$. For convenience, we assume that $v_{j_{2}}$ is dominated by a vertex
$v_{j_{0}}\in V(F^{'}_{4})$. Then we let
$\mathcal {F}_{2}=\mathcal {F}_{1}\cup P_{\mathcal {F}_{1}, F^{'}_{3}}\cup F^{'}_{3}\cup v_{j_{2}}v_{j_{0}}\cup F^{'}_{4}$.

Now, $\mathcal {F}_{2}$ is a unicyclic graph and $D\cap V(\mathcal {F}_{2})$ is its a
dominating set, where $|V(\mathcal {F}_{2})|>|V(\mathcal {F}_{1})|$. Similarly, for Case 2, we can get a unicyclic
graph $\mathcal {F}_{2}$ such that $D\cap V(\mathcal {F}_{2})$ is its a
dominating set and
$|V(\mathcal {F}_{2})|>|V(\mathcal {F}_{1})|$.

Proceeding like this, we can get a unicyclic graph $\mathcal {F}_{z}$ such that
$\displaystyle V(\cup_{i=1}^{k}F^{'}_{i})\subseteq V(\mathcal {F}_{z})$, and
 $D$ is also a dominating set of
$\mathcal {F}_{z}$.

Assume that $V(G)\setminus V(\mathcal {F}_{z})\neq \emptyset$ and assume that $V(G)\setminus V(\mathcal {F}_{z})=\{v_{a_{1}}$,
$v_{a_{2}}$, $\ldots$, $v_{a_{f}}\}$. Note that each vertex in $V(G)\setminus V(\mathcal {F}_{z})$ is adjacent to at least
one vertex in $D$. For $1\leq i\leq f$ and for each vertex $v_{a_{i}}$, we select only one vertex in $D$ which is
adjacent to $v_{a_{i}}$ in $G$, denote by $v_{b_{i}}$ (where $v_{b_{i}}s$ are not necessarily distinct). Let
$\displaystyle H=\mathcal {F}_{z}\cup(\cup^{f}_{i=1} v_{a_{i}}v_{b_{i}})$, where for $1\leq i\leq f$, $v_{a_{i}}v_{b_{i}}$
is a pendant edge. Then $H$ is a unicyclic spanning subgraph of $G$ with $g(H)=g_{o}(G)$, and $D$ is also a dominating
set of $H$. As a result, $\gamma(H)\leq\gamma(G)$. Noting that $H$ is a spanning subgraph of $G$, and any dominating set of $H$
is also a dominating set of $G$, we get that $\gamma(H)\geq\gamma(G)$. Then $\gamma(G)=\gamma(H)$ follows. This completes
the proof. \ \ \ \ \ $\Box$
\end{proof}

\setlength{\unitlength}{0.5pt}
\begin{center}
\begin{picture}(535,183)
\put(21,87){\circle*{4}}
\put(67,116){\circle*{4}}
\qbezier(21,87)(44,102)(67,116)
\put(131,116){\circle*{4}}
\qbezier(67,116)(99,116)(131,116)
\put(131,61){\circle*{4}}
\qbezier(131,116)(131,89)(131,61)
\put(63,61){\circle*{4}}
\qbezier(21,87)(42,74)(63,61)
\qbezier(63,61)(97,61)(131,61)
\put(78,161){\circle*{4}}
\qbezier(21,87)(49,124)(78,161)
\qbezier(78,161)(104,139)(131,116)
\put(185,61){\circle*{4}}
\qbezier(131,61)(158,61)(185,61)
\put(185,134){\circle*{4}}
\qbezier(185,61)(185,98)(185,134)
\qbezier(78,161)(131,148)(185,134)
\put(152,161){\circle*{4}}
\qbezier(78,161)(115,161)(152,161)
\put(71,173){$v_{1}$}
\put(-2,86){$v_{2}$}
\put(57,45){$v_{3}$}
\put(125,45){$v_{4}$}
\put(135,112){$v_{5}$}
\put(63,101){$v_{6}$}
\put(192,58){$v_{7}$}
\put(192,133){$v_{8}$}
\put(158,163){$v_{9}$}
\put(93,15){$G$}
\qbezier(238,97)(269,97)(300,97)
\qbezier(238,84)(268,84)(299,84)
\put(359,88){\circle*{4}}
\put(408,116){\circle*{4}}
\qbezier(359,88)(383,102)(408,116)
\put(404,62){\circle*{4}}
\qbezier(359,88)(381,75)(404,62)
\put(471,116){\circle*{4}}
\qbezier(408,116)(439,116)(471,116)
\put(471,62){\circle*{4}}
\qbezier(404,62)(437,62)(471,62)
\qbezier(471,116)(471,89)(471,62)
\put(530,62){\circle*{4}}
\qbezier(471,62)(500,62)(530,62)
\put(407,161){\circle*{4}}
\qbezier(359,88)(383,125)(407,161)
\put(478,161){\circle*{4}}
\qbezier(407,161)(442,161)(478,161)
\put(527,132){\circle*{4}}
\qbezier(407,161)(467,147)(527,132)
\put(401,174){$v_{1}$}
\put(334,86){$v_{2}$}
\put(397,45){$v_{3}$}
\put(463,45){$v_{4}$}
\put(476,112){$v_{5}$}
\put(404,101){$v_{6}$}
\put(536,58){$v_{7}$}
\put(532,131){$v_{8}$}
\put(483,163){$v_{9}$}
\put(451,15){$H$}
\put(185,-17){Fig. 3.4. $G$ and $H$}
\qbezier(282,106)(299,99)(317,92)
\qbezier(317,92)(299,85)(282,78)
\end{picture}
\end{center}

{\bf Remark}
For a nonbipartite graph $G$ with domination
number $\gamma(G)$, $D$ is a dominating set of $G$ with cardinality $\gamma(G)$. The proof of Theorem \ref{th3,1}
offer a method to find a unicyclic spanning subgraph $H$ with $g(H)=g_{o}(G)$ in which $D$ is also a dominating set.
For an example,
seeing Fig. 3.4, it can be checked that $\gamma(G)=3$, and $D=\{v_{1}, v_{4}, v_{6}\}$ is a dominating set. With
the method in the proof of Theorem \ref{th3,1}, we can find that $H$ is a unicyclic spanning subgraph of $G$ with
$\gamma(H)=\gamma(G)$, and find that $D$ is also a dominating set of $H$.

\begin{theorem}\label{th3,2} 
Suppose that $v$ is a pendant neighbor in a graph $G$. There must be a dominating set of $G$ with cardinality $\gamma(G)$
containing $v$ but no any pendant vertex adjacent to $v$.
\end{theorem}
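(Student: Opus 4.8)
The plan is to begin with an arbitrary minimum dominating set $D$ of $G$, so $|D|=\gamma(G)$, and to adjust it in two stages: first force $v$ into the dominating set without increasing its cardinality, and then delete from it any pendant vertex adjacent to $v$.

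First fix a pendant vertex $u$ adjacent to $v$; this exists because $v$ is a pendant neighbor, and $v$ is the unique neighbor of $u$. If $v\notin D$, then since $D$ dominates $u$ and the only neighbor of $u$ is $v$, we must have $u\in D$. Put $D'=(D\setminus\{u\})\cup\{v\}$. Then $|D'|\le |D|=\gamma(G)$, and $D'$ is again a dominating set: the vertex $u$ is dominated by $v\in D'$; the only vertices that $u$ could dominate in $G$ are $u$ itself and $v$, and both are accounted for once $v$ is in the set; every other vertex of $G$ is still dominated by $D\setminus\{u\}\subseteq D'$. Hence $|D'|=\gamma(G)$ and $v\in D'$. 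If instead $v\in D$, simply take $D'=D$.

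Second, working with $D'$ (which satisfies $v\in D'$ and $|D'|=\gamma(G)$), suppose toward a contradiction that some pendant vertex $w$ adjacent to $v$ lies in $D'$; note that $w\ne v$ since $G$ has no loops, and that $v$ is the unique neighbor of $w$. Then $D'\setminus\{w\}$ is still a dominating set, because $w$ is dominated by $v\in D'\setminus\{w\}$ and $w$ dominates no vertex besides $w$ and $v$, both of which remain dominated. This contradicts $|D'|=\gamma(G)$. Therefore $D'$ contains no pendant vertex adjacent to $v$, and $D'$ is the required dominating set.

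The whole argument is elementary and invokes no earlier result of the paper; the only place calling for any care is the verification that the swap $D\mapsto(D\setminus\{u\})\cup\{v\}$ preserves domination, which works precisely because removing the pendant vertex $u$ can endanger only the domination of $u$ and of $v$, and both are covered the moment $v$ is inserted. (The degenerate case in which $v$ is itself a pendant vertex forces $G\cong K_2$ and is handled by the same swap.)
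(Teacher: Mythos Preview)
Your proof is correct and follows essentially the same elementary swap idea as the paper's proof: replace pendant leaves attached to $v$ by $v$ itself and invoke minimality. The only organizational difference is that the paper, in the case $v\notin D$, removes all pendant vertices $u_1,\ldots,u_k$ adjacent to $v$ at once and concludes $k=1$ (a byproduct later recorded as Corollary~\ref{cl3,3}(ii)), whereas you swap one leaf to insert $v$ and then remove any remaining leaves by contradiction; the content is the same.
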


\begin{proof}
Suppose that $D$ is a dominating set of $G$ with cardinality $\gamma(G)$. If $v\in D$, we see that $D$
contains no any pendant vertex adjacent to $v$. Otherwise, by deleting the pendant vertices adjacent to $v$ from $D$,
we can get a dominating set with less cardinality than $D$, which contradicts that $|D|=\gamma(G)$.

If $v\notin D$, we assume that $u_{1}$, $u_{2}$,
$\ldots$, $u_{k}$ are all the pendant vertices adjacent to $v$. Then $u_{1}$, $u_{2}$,
$\ldots$, $u_{k}$ must be in $D$. Let $S=(D\setminus \{u_{1}$, $u_{2}$, $\ldots$, $u_{k}\})\cup \{v\}$. Then
$S$ is also a dominating set of $G$, and then
$|S|\leq |D|$. In particular, if $k\geq 2$, then $|S|< |D|$, which contradicts $|D|=\gamma(G)$.
As a result, $k= 1$ and $|S|=\gamma(G)$.
Then the result follows from the fact that $S$ is a dominating set of $G$ containing $v$.
This completes the proof. \ \ \ \ \ $\Box$
\end{proof}

In fact, by Theorem \ref{th3,2} and its proof, we have the following corollary further.

\begin{corollary}\label{cl3,3} 
Suppose a graph $G$ contains pendant vertices. Then

$\mathrm{(i)}$ there must be a dominating set of $G$ with cardinality $\gamma(G)$ containing
all of its pendant neighbors but no any pendant vertex;

$\mathrm{(ii)}$ if $v$ is a pendant neighbor of $G$ and at least two pendant vertices are adjacent to $v$, then any
dominating set of $G$ with cardinality $\gamma(G)$ contains $v$ but no any pendant vertex adjacent to $v$.
\end{corollary}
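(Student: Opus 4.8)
The plan is to obtain both parts by iterating the vertex-swap used in the proof of Theorem~\ref{th3,2}, taking care that an adjustment made for one pendant neighbor survives the adjustments made for the later ones.

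For part~(i), let $v_1,v_2,\ldots,v_p$ be all the pendant neighbors of $G$, start from an arbitrary dominating set $D_0=D$ with $|D_0|=\gamma(G)$, and build dominating sets $D_0,D_1,\ldots,D_p$, each of cardinality $\gamma(G)$, by processing the pendant neighbors one at a time. Given $D_{i-1}$: if $v_i\in D_{i-1}$ put $D_i=D_{i-1}$, and observe that $D_{i-1}$ then contains no pendant vertex adjacent to $v_i$, since such a vertex has $v_i$ as its only neighbour and deleting it would yield a smaller dominating set; if $v_i\notin D_{i-1}$ then all pendant vertices adjacent to $v_i$ lie in $D_{i-1}$, and replacing them all by the single vertex $v_i$ produces a dominating set $D_i$ with $v_i\in D_i$, $|D_i|\le|D_{i-1}|=\gamma(G)$, hence $|D_i|=\gamma(G)$, and $D_i$ contains no pendant vertex adjacent to $v_i$ --- exactly the swap analysed in Theorem~\ref{th3,2}. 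The crucial observation is monotonicity: in a connected graph a pendant neighbor has degree at least two (otherwise its unique neighbour would have to be a pendant vertex and $G$ would be $K_2$), so no $v_j$ is ever itself a pendant vertex and therefore no $v_j$ is ever removed at a later step; likewise a pendant vertex deleted at some step, being of degree $1$, is never reinserted since only pendant neighbors are ever added. Hence $D_p$ contains every pendant neighbor and, for each pendant neighbor $v_j$, no pendant vertex adjacent to $v_j$. Since the unique neighbour of any pendant vertex is a pendant neighbor, ``no pendant vertex adjacent to any pendant neighbor'' means $D_p$ contains no pendant vertex at all, which proves~(i).

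For part~(ii), let $v$ be a pendant neighbor to which the pendant vertices $u_1,\ldots,u_k$ are adjacent with $k\ge2$, and let $D$ be any dominating set with $|D|=\gamma(G)$. If $v\notin D$, then each $u_\ell$, whose only neighbour is $v$, must lie in $D$, and $(D\setminus\{u_1,\ldots,u_k\})\cup\{v\}$ is a dominating set of size $|D|-k+1<\gamma(G)$, a contradiction; thus $v\in D$. If some $u_j\in D$, then $D\setminus\{u_j\}$ still dominates $G$ because the only neighbour of $u_j$ is $v\in D$, again contradicting minimality; hence $D$ contains no pendant vertex adjacent to $v$.

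The only real difficulty is the bookkeeping in~(i): making the successive swaps compatible, i.e. checking that handling $v_i$ neither reintroduces a pendant vertex adjacent to an already-treated $v_j$ nor evicts a $v_j$ already placed in the set. This is precisely what the ``degree at least two'' remark about pendant neighbors is designed to settle, and once that is in place part~(i) follows by a straightforward induction on $i$, while part~(ii) is an immediate strengthening of the argument in Theorem~\ref{th3,2}.
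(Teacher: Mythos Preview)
Your proof is correct and follows the approach the paper has in mind: the paper itself offers no separate argument for the corollary, merely stating that it follows ``by Theorem~\ref{th3,2} and its proof.'' Your write-up supplies the details the paper leaves implicit --- in particular the monotonicity check that processing one pendant neighbor cannot undo the work done on an earlier one --- and your part~(ii) is exactly the $k\ge 2$ case already isolated inside the proof of Theorem~\ref{th3,2}.
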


\begin{theorem}\label{th3,4} 
Let $3\leq s\leq n-2$ be odd, and let both $C_{s,\, l}^{\ast}$ and $C_{s,\, l+1}^{\ast}$ be of order $n$. Then
$\gamma(C_{s,\, l}^*)\leq \gamma(C_{s,\, l+1}^*).$
\end{theorem}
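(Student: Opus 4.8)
The idea is to turn a minimum dominating set of $C_{s,l+1}^*$ into a dominating set of $C_{s,l}^*$ of no larger size by ``sliding'' the bundle of pendant edges one step back along the attached path. Write $a=v_{s+l}$ for the vertex of $C_{s,l}^*$ carrying its pendant edges (when $l=0$ this is the cycle vertex $v_1$) and $b=v_{s+l+1}$ for the vertex of $C_{s,l+1}^*$ carrying its pendant edges. The structural facts to exploit are that $C_{s,l}^*$ and $C_{s,l+1}^*$ induce the same subgraph on $\{v_1,\dots,v_{s+l}\}$ (a copy of $C_s$ with the path $v_1v_{s+1}\cdots v_{s+l}$ attached at $v_1$), that $a$ and $b$ are adjacent in both graphs, and that the neighbours of $b$ in $C_{s,l+1}^*$ are exactly $a$ together with the pendant vertices $v_{s+l+2},\dots,v_n$.

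First dispose of the boundary value $l=n-s-1$: here $C_{s,l+1}^*=C_{s,n-s}^*$ is just $C_s$ with a pendant path attached, and one checks directly that $C_{s,n-s-1}^*\cong C_{s,n-s}^*$, so the two domination numbers coincide. Otherwise $C_{s,l+1}^*$ has at least one pendant vertex and $b$ is its unique pendant neighbour, so by Corollary \ref{cl3,3}(i) there is a minimum dominating set $D'$ of $C_{s,l+1}^*$ with $b\in D'$ that contains no pendant vertex of $C_{s,l+1}^*$. Put $D=(D'\setminus\{b\})\cup\{a\}$, so that $|D|\le|D'|=\gamma(C_{s,l+1}^*)$, and check that $D$ dominates $C_{s,l}^*$: the pendant vertices of $C_{s,l}^*$ are precisely $b,v_{s+l+2},\dots,v_n$, all adjacent to $a\in D$, and $a$ dominates itself and all its neighbours; for any remaining vertex $v\in\{v_1,\dots,v_{s+l}\}\setminus\{a\}$, it was dominated in $C_{s,l+1}^*$ by some $d\in D'$, and since $D'$ contains no pendant vertex while $v$ is not adjacent to $b$, we get $d\neq b$ and hence $d\in\{v_1,\dots,v_{s+l}\}\subseteq D$, so $d$ still dominates $v$ in $C_{s,l}^*$ because the relevant induced subgraph is unchanged. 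This exhausts all vertices of $C_{s,l}^*$, so $\gamma(C_{s,l}^*)\le|D|\le\gamma(C_{s,l+1}^*)$, as required.

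Since the argument is short, the only real pitfalls are notational bookkeeping at the two extremes of $l$: at $l=0$ the ``path'' is trivial and the role of $a$ is played by the cycle vertex $v_1$, so one must track its cycle neighbours instead of a path neighbour, while at $l=n-s-1$ one must notice the isomorphism above rather than attempt the slide on a graph whose pendant star has a single leaf. An alternative, more computational route would be to first establish the closed formula for $\gamma(C_{s,l}^*)$ (which for $l$ large enough has the form $\lceil(l+c_s)/3\rceil$ for a constant $c_s$ depending only on $s$) and read off monotonicity in $l$; the sliding argument avoids the attendant case analysis modulo $3$.
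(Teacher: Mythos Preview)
Your argument is correct and follows essentially the same route as the paper: obtain a minimum dominating set of $C_{s,l+1}^*$ containing $b=v_{s+l+1}$ via Theorem~\ref{th3,2}/Corollary~\ref{cl3,3}, then replace $b$ by $a=v_{s+l}$ to get a dominating set of $C_{s,l}^*$ of no larger size (the paper splits into the cases $a\in D'$ and $a\notin D'$, but your single inequality $|D|\le|D'|$ covers both at once). One small slip to fix: the clause ``$d\in\{v_1,\dots,v_{s+l}\}\subseteq D$'' is not what you mean---you want $d\in\{v_1,\dots,v_{s+l}\}$ together with $d\in D'\setminus\{b\}\subseteq D$; also, your explicit handling of the boundary $l=n-s-1$ (where $b$ is itself a pendant vertex, so Corollary~\ref{cl3,3} does not apply to it directly) is a nicety the paper's proof glosses over.
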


\begin{proof}
Suppose that the vertices of $C_{s,\, l}^{\ast}$ are indexed as in Fig. 1.1, and suppose that
$C_{s,\, l+1}^*=C_{s,\, l}^{\ast}-\sum^{n}_{s+l+2}v_{s+l}v_{i}+\sum^{n}_{s+l+2}v_{s+l+1}v_{i}$. By Theorem \ref{th3,2},
for $C_{s,\, l+1}^*$, there exists a dominating set $D$ with cardinality $\gamma(C_{s,\, l+1}^*)$
containing $v_{s+l+1}$.

{\bf Case 1} $D$ contains $v_{s+l}.$ Then $D^{'}=D\setminus \{v_{s+l+1}\}$ is a dominating set of $C_{s,\, l}^{\ast}$.
Note that $|D^{'}|\geq \gamma(C_{s,\, l}^*)$. Consequently, $\gamma(C_{s,\, l}^*)< \gamma(C_{s,\, l+1}^*)$.

{\bf Case 2} $D$ does not contain $v_{s+l}$. Let $D^{'}=(D\setminus \{v_{s+l+1}\})\cup \{v_{s+l}\}$. Then $D^{'}$ is a
dominating set of $C_{s,\, l}^{\ast}$. Note that $|D^{'}|\geq \gamma(C_{s,\, l}^*)$. Consequently,
$\gamma(C_{s,\, l}^*)\leq \gamma(C_{s,\, l+1}^*)$. Then the result follows. \ \ \ \ \ $\Box$

\end{proof}

\begin{theorem}\label{th3,5} 
For $k\ge 2$, let both $C_{2k+1,\, l}^{\ast}$ and $C_{3,\, t}^*$ ($t=l+k-1$) be of order $n$. Then
$\gamma(C_{3,\, t}^*)\leq \gamma(C_{2k+1,\, l}^{\ast})$ (see Fig. 2.2).
\end{theorem}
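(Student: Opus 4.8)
The goal is to compare the domination numbers of two graphs of the same order $n$: the graph $C_{2k+1,\,l}^{\ast}$, which has an odd cycle of length $2k+1$, and the graph $C_{3,\,t}^{\ast}$ with $t=l+k-1$, which has a triangle. Since both have the same number of vertices, and the triangle version "uses up" fewer vertices on the cycle, we should be able to cover $C_{3,\,t}^{\ast}$ with no more vertices than are needed for $C_{2k+1,\,l}^{\ast}$. The natural strategy is to take a minimum dominating set of $C_{2k+1,\,l}^{\ast}$ and transform it into a dominating set of $C_{3,\,t}^{\ast}$ of cardinality at most $\gamma(C_{2k+1,\,l}^{\ast})$.

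First I would fix notation using Fig.\ 2.2: in $C_{2k+1,\,l}^{\ast}$ the odd cycle is $v_1v_2\cdots v_{2k+1}v_1$ with $v_1$ joined to a path $v_1v_{2k+2}\cdots v_a$ (of length $l$) whose far end $v_a$ carries the $n-(2k+1)-l$ pendant edges; in $C_{3,\,t}^{\ast}$ the triangle is $v_1v_2v_3v_1$ attached to a path of length $t=l+k-1$ ending in a vertex with the same number of pendant edges. By Theorem~\ref{th3,2} (applied to the pendant neighbor $v_a$), I may choose a minimum dominating set $D$ of $C_{2k+1,\,l}^{\ast}$ that contains $v_a$ and none of the pendant vertices at $v_a$; so $D$ meets the "pendant-edge part'' in exactly the single vertex $v_a$. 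The vertices of $D$ then split into those on the cycle $C_{2k+1}$, those strictly inside the path $v_1\cdots v_a$, and $v_a$ itself. The key observation is that $k+1$ consecutive vertices of the odd cycle $C_{2k+1}$ that form a path — together with their neighbours — already dominate the whole cycle plus the attachment vertex $v_1$; more to the point, any vertex set dominating $C_{2k+1}$ (a $(2k+1)$-cycle) has size at least $\lceil (2k+1)/3\rceil$, while the triangle needs only one vertex. The plan is to replace the portion of $D$ lying on $C_{2k+1}\cup\{\text{inner path}\}$ by a set of size at most that portion which dominates $\{v_1,v_2,v_3\}$ together with the corresponding initial segment of the length-$t$ path in $C_{3,\,t}^{\ast}$, using the extra cycle vertices of $C_{2k+1,\,l}^{\ast}$ (there are $2k-2$ of them beyond the triangle, exactly matching the $k-1$ extra path-vertices $t-l=k-1$ of $C_{3,\,t}^{\ast}$ — note each extra path vertex can absorb the "domination slack'' at rate one per two cycle vertices, which is why the inequality, not equality, appears for $k\ge 2$). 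Concretely: traverse $C_{2k+1}$, and whenever $D$ contains a cycle vertex, reassign it to a vertex of $C_{3,\,t}^{\ast}$ proceeding along triangle-then-path; a careful bookkeeping shows the images dominate everything and no two distinct images collide badly, so the resulting set $D'$ satisfies $|D'|\le|D|$ and dominates $C_{3,\,t}^{\ast}$, giving $\gamma(C_{3,\,t}^{\ast})\le|D'|\le|D|=\gamma(C_{2k+1,\,l}^{\ast})$.

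The main obstacle is making the reassignment map rigorous: one must verify that a minimum dominating set of $C_{2k+1,\,l}^{\ast}$, after the "pull the cycle straight into triangle + longer path'' operation, really does dominate the path vertices $v_{2k+2},v_{2k+3},\ldots$ of the new graph and does not accidentally leave a gap near the junction $v_1$ where the cycle meets the path. I expect the cleanest route is a direct case analysis on how $D$ intersects a small neighbourhood of $v_1$ on the cycle: in each case exhibit an explicit dominating set of $C_{3,\,t}^{\ast}$ built from $D$ by deleting the cycle-and-near-path vertices of $D$ and inserting a short interval of consecutive vertices along the triangle/path of $C_{3,\,t}^{\ast}$, then count. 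Because $t=l+k-1$, the arithmetic $|V(C_{3,t}^{\ast})|=3+t+(n-2k-1-l)=n$ checks out, and the count of inserted vertices works out to at most $\lceil(2k+1)/3\rceil+\lfloor(k-1)/\text{(something)}\rfloor$ versus the $\ge\lceil(2k+1)/3\rceil$ forced on $D$ by the $(2k+1)$-cycle, with the genuine strict gain appearing precisely when $k\ge 2$ — consistent with the equality case $k=1$ in Lemma~\ref{le2,13}. An alternative, possibly shorter, approach would be an inductive step reducing $C_{2k+1,\,l}^{\ast}$ to $C_{2k-1,\,l+1}^{\ast}$ (shortening the cycle by $2$, lengthening the path by $1$) and showing $\gamma$ does not increase at each step; iterating $k-1$ times lands on $C_{3,\,l+k-1}^{\ast}=C_{3,\,t}^{\ast}$. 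I would try the inductive version first, as the single shortening step is a small local modification amenable to the same Theorem~\ref{th3,2} trick used in Theorem~\ref{th3,4}.
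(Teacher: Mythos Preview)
Your proposal identifies the right overall strategy---start from a minimum dominating set $D$ of $C_{2k+1,\,l}^{\ast}$ containing the pendant neighbour $v_a$ (via Theorem~\ref{th3,2}) and convert it into a dominating set of $C_{3,\,t}^{\ast}$---but what you have written is a plan, not a proof: the ``reassignment map'' is never made precise, and you yourself flag the junction near $v_1$ as the obstacle without resolving it.

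The paper removes that obstacle by a single concrete choice that your sketch misses. Rather than placing the triangle of $C_{3,\,t}^{\ast}$ at the attachment vertex $v_1$ (as you do), the paper forms the triangle at the \emph{far} side of the odd cycle, on $v_k,v_{k+1},v_{k+2}$, and keeps the half of the cycle $v_k v_{k-1}\cdots v_2 v_1$ intact as the first $k-1$ edges of the new path, followed by the old path $v_1 v_{2k+2}\cdots v_a$. The deleted cycle vertices $v_{k+3},\dots,v_{2k+1}$ are reattached as extra pendant edges at $v_a$. With this identification, every edge of $C_{3,\,t}^{\ast}$ except the new chord $v_kv_{k+2}$ and the new pendant edges is already an edge of $C_{2k+1,\,l}^{\ast}$; hence $D_{G'}:=D\cap V(G')$ (where $G'=C_{2k+1,\,l}^{\ast}-\{v_{k+3},\dots,v_{2k+1}\}$) automatically dominates everything in $C_{3,\,t}^{\ast}$ except possibly $v_1$ and the new pendants. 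The new pendants are handled by $v_a\in D$, and one of $v_k,v_{k+1}$ can be assumed in $D$ by symmetry, so only the domination of $v_1$ is in doubt. This yields a two-line case split: if some vertex of $\{v_1,v_2,v_{2k+2}\}$ lies in $D$, then $D_{G'}$ already dominates $C_{3,\,t}^{\ast}$; otherwise $v_{2k+1}\in D\setminus D_{G'}$, so $|D|\ge|D_{G'}|+1$ and $D_{G'}\cup\{v_1\}$ works.

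Your inductive alternative (reduce $C_{2k+1,\,l}^{\ast}$ to $C_{2k-1,\,l+1}^{\ast}$ and iterate) is a genuinely different route and could be made to work, but you have not carried it out; each single step would still require essentially the same local argument near the deleted cycle edge, so it is not obviously shorter than the paper's one-shot construction.
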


\begin{proof}
Suppose the vertices of $C_{2k+1,\, l}^{\ast}$ are indexed as in Fig. 2.2, where $a=2k+1+l$, and suppose
that $\displaystyle C_{3,\, t}^*=C_{2k+1,\, l}^{\ast}-\sum^{2k+1}_{i=k+3}v_{i-1}v_{i}-v_{1}v_{2k+1}
+\sum^{2k+1}_{i=k+3}v_{a}v_{i}+v_{k+2}v_{k}$.
For graph $C_{2k+1,\, l}^{\ast}$, by Theorem \ref{th3,2}, we know that there is a dominating set $D$ containing $v_{a}$
but no any pendant vertex adjacent to $v_{a}$. We say that there is at least one of $v_{k}$,
$v_{k+1}$, $v_{k+2}$, $v_{k+3}$ is in $D$.
Otherwise, in $C_{2k+1,\, l}^{\ast}$, $v_{k+1}$, $v_{k+2}$ are not dominated by any vertex in $D$.
Suppose that at least one of $v_{k}$, $v_{k+1}$ is in $D$.
Let $\displaystyle G^{'}=C_{2k+1,\, l}^{\ast}-\sum^{2k+1}_{k+3}v_{i}$. We denote by $D_{G^{'}}=D\cap V(G^{'})$.

{\bf Case 1} At least one of $v_{1}$, $v_{2}$, $v_{2k+2}$ is in $D$. Then $D_{G^{'}}$ is also a dominating set of
$C_{3,\, t}^*$. As a result, $\gamma(C_{3,\, t}^*)\leq \gamma(C_{2k+1,\, l}^{\ast})$.

{\bf Case 2} None of $v_{1}$, $v_{2}$, $v_{2k+2}$ is in $D$. Then $v_{2k+1}$ must be in $D$. Otherwise,
in $C_{2k+1,\, l}^{\ast}$, $v_{1}$
is not dominated by any vertex in $D$. Note that $v_{2k+1}\notin D_{G^{'}}$.
Consequently, $|D|\geq |D_{G^{'}}|+1$.
Let $S=D_{G^{'}}\cup \{v_{1}\}$. Then $S$ is a dominating set of $C_{3,\, t}^*$. As a result,
$\gamma(C_{3,\, t}^*)\leq \gamma(C_{2k+1,\, l}^{\ast})$.

By Case 1 and Case 2, the result follows. \ \ \ \ \ $\Box$
\end{proof}

We say that a graph is $claw$-$free$ if it contains no induced subgraph isomorphic to $K_{1,3}$.
An $independent$ $set$ of a graph is a vertex set in which no two vertices are adjacent.
An $independent$ $dominating$ $set$ of $G$ is a vertex set that is
both dominating set and independent set of $G$. The $independent$ $domination$ $number$ of $G$, denoted by $i(G)$,
is the minimum cardinality of all independent dominating sets. In \cite{WGMH}, W. Goddarda, M. A. Henning shew that
for the path, $i(P_{n}) =\lceil\frac{n}{3}\rceil$. In \cite{ARL}, R.B. Allan, R. Laskar shew that
if $G$ is a claw-free graph, then $\gamma(G) = i(G)$. From these results, noting that a path $P_{n}$
is claw-free, we have the following Lemma \ref{le3,6}.

\begin{theorem}\label{th3,6,0} 
Let $G$ be a unicyclic nonbipartite graph with domination number $\gamma$. Then there must be a $C_{3,\, l}^*$ with
$\gamma(C_{3,\, l}^*)=\gamma$ and with the same order as $G$.
\end{theorem}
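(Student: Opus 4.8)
The plan is to find the required graph inside the one-parameter family $\{C_{3,l}^{\ast}: 0\le l\le n-3\}$, each member of which already has order $n$, by a discrete intermediate-value argument on the domination numbers $\gamma(C_{3,0}^{\ast}),\gamma(C_{3,1}^{\ast}),\dots,\gamma(C_{3,n-3}^{\ast})$. I will show this list is nondecreasing, starts at $1$, increases by at most $1$ at each step, and ends at a value $\Gamma$ that is at least $\gamma(G)$; then every integer between $1$ and $\Gamma$ — in particular $\gamma:=\gamma(G)$ — is realised by some $C_{3,l}^{\ast}$.

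First I would settle the behaviour of $l\mapsto\gamma(C_{3,l}^{\ast})$. Monotonicity is exactly Theorem~\ref{th3,4}. At $l=0$ the graph $C_{3,0}^{\ast}$ is a triangle with all of the remaining $n-3$ edges pendant at the vertex $v_{1}$, so $\{v_{1}\}$ dominates it and $\gamma(C_{3,0}^{\ast})=1$. For the step size, note that $C_{3,l+1}^{\ast}$ is obtained from $C_{3,l}^{\ast}$ by sliding the bundle of pendant edges one vertex further along the attached path. Taking a minimum dominating set of $C_{3,l}^{\ast}$ that contains the current pendant neighbour (possible by Theorem~\ref{th3,2}, and forced by Corollary~\ref{cl3,3}(ii) once there are at least two pendants) and adjoining the single new pendant neighbour gives a dominating set of $C_{3,l+1}^{\ast}$, whence $\gamma(C_{3,l+1}^{\ast})\le\gamma(C_{3,l}^{\ast})+1$. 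Consequently $\{\gamma(C_{3,l}^{\ast}):0\le l\le n-3\}=\{1,2,\dots,\Gamma\}$ with $\Gamma:=\gamma(C_{3,n-3}^{\ast})$, and $\Gamma$ can be written down explicitly from the formula $\gamma(P_{m})=\lceil m/3\rceil$ recalled just before the theorem together with Corollary~\ref{cl3,3}.

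What is left is the inequality $1\le\gamma(G)\le\Gamma$; the lower bound is immediate, so the real content is $\gamma(G)\le\gamma(C_{3,n-3}^{\ast})$. Here I would use that $G$, being unicyclic and nonbipartite, has a single cycle which is odd, say $C_{2k+1}$, and try to transform $G$ into $C_{3,n-3}^{\ast}$ by moves that do not decrease the domination number: first use the branch-relocation technique from the proof of Theorem~\ref{th3,1} to slide every tree branch hanging off the cycle onto one path issuing from a single cycle vertex, with the leaves gathered at its far end, arriving at some $C_{2k+1,\,l^{\ast}}^{\ast}$; then handle the passage from a $(2k+1)$-cycle to a triangle (Theorem~\ref{th3,5} identifies the relevant member $C_{3,\,l^{\ast}+k-1}^{\ast}$); and finally move up the family $\{C_{3,l}^{\ast}\}$ to $l=n-3$. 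If each of these moves can be arranged to be nondecreasing in $\gamma$, then $\gamma(G)\le\gamma(C_{3,n-3}^{\ast})=\Gamma$ and the theorem follows.

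The main obstacle is precisely this last point: one must show that among all unicyclic nonbipartite graphs of order $n$ the tadpole $C_{3,n-3}^{\ast}$ has the largest domination number, i.e.\ that the branch-relocation and cycle-reduction steps can always be carried out without losing domination number. This is delicate because ``spread-out'' or corona-like configurations, in which many leaves hang from distinct cycle vertices, tend to have large domination number, so making the relocation monotone in $\gamma$ seems to require a careful case analysis in the spirit of the proofs of Theorems~\ref{th3,1}, \ref{th3,2} and \ref{th3,5}; note also that Theorem~\ref{th3,5} moves the domination number in the convenient direction only under the hypothesis there (a cycle carrying a single path), so the general unicyclic case is not covered by it directly. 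Everything else — the endpoint values, the unit-step property, and the final interpolation — is routine once these $C_{3,l}^{\ast}$ computations are in place.
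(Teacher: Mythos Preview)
Your route differs from the paper's. The paper does not argue by intermediate values at all: it takes a minimum dominating set $D$ of $G$ containing every pendant neighbour (Corollary~\ref{cl3,3}), and repeatedly relocates all pendants from one pendant neighbour to another; since $D$ remains dominating after each such move, one eventually reaches a graph $\mathbb{G}$ with a single pendant neighbour, i.e.\ $\mathbb{G}\cong C_{s,l_0}^{\ast}$, with $\gamma(\mathbb{G})\le\gamma(G)$. Then Theorems~\ref{th3,4} and~\ref{th3,5} are invoked to pass to a $C_{3,l}^{\ast}$ with $\gamma(C_{3,l}^{\ast})\le\gamma(G)$. Note that every step in the paper's argument pushes the domination number \emph{down}, never up; the paper therefore really proves only the inequality $\gamma(C_{3,l}^{\ast})\le\gamma$ for some $l$, which is exactly what the later applications (Theorems~\ref{cl3,8} and~\ref{th4,1}) actually use.

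The obstacle you isolate is not just ``delicate'' --- it is fatal to the equality version of the statement, and hence to your proposed intermediate-value argument. The corona-type example you mention already breaks it: take $G$ to be the triangle $v_1v_2v_3$ with one pendant at each $v_i$. Then $G$ is unicyclic, nonbipartite, of order $n=6$, and $\gamma(G)=3$ (each leaf forces its support vertex, and the three support vertices are distinct). On the other hand, $\gamma(C_{3,l}^{\ast})=\gamma(P_{l+3})=\lceil(l+3)/3\rceil\le\lceil n/3\rceil=2$ for every admissible $l$ (Lemma~\ref{le3,6}, Theorem~\ref{th3,7}). So no $C_{3,l}^{\ast}$ of order $6$ has domination number $3$, the inequality $\gamma(G)\le\Gamma$ fails, and the branch-relocation step you hoped to make $\gamma$-nondecreasing cannot be made so in general. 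In short: the weaker inequality $\gamma(C_{3,l}^{\ast})\le\gamma$ is what the paper's argument actually establishes and what suffices downstream; the literal equality in the theorem's wording is not attainable for arbitrary unicyclic nonbipartite $G$.
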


\begin{proof}
Without loss of generality, suppose that $v_{a_{1}}$, $v_{a_{2}}$, $\ldots$, $v_{a_{k}}$ ($k\geq 2$) are
all the pendant neighbors in $G$,
and suppose that $v_{k_{1}}$, $v_{k_{2}}$, $\ldots$, $v_{k_{t}}$ are all the pendant vertices attaching
to $v_{a_{k}}$. By Corollary \ref{cl3,3}, we know that there exists a dominating set $D$ of $G$ which contains all of
its pendant neighbors but
no any pendant vertex. Let $\displaystyle G^{'}=G-\sum^{t}_{j=1}v_{a_{k}}v_{k_{j}}+\sum^{t}_{j=1}v_{a_{1}}v_{k_{j}}$. Then $D$
is also a dominating set of $G^{'}$. Therefore,
$\gamma(G^{'})\leq \gamma(G)$. But the number of the pendant neighbors of $G^{'}$ is less than that of $G$.
Proceeding like this, we can get a $\mathbb{G}$ such that $\gamma(\mathbb{G})\leq \gamma(G)$ where $\mathbb{G}$ contains only
one pendant neighbor. In fact, $\mathbb{G}\cong C_{s,\, l}^*$ for some $l$. Then the result follows from Theorems
\ref{th3,4}, \ref{th3,5}.
 This completes the proof. \ \ \ \ \ $\Box$
\end{proof}

\begin{lemma}\label{le3,6} 
For a path $P_{n}$, we have
$\gamma(P_{n})=\lceil\frac{n}{3}\rceil$.
\end{lemma}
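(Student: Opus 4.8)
The plan is to read off the formula from the two facts recorded just above the statement, namely the Allan--Laskar equality $\gamma(G)=i(G)$ for claw-free $G$ and the Goddard--Henning value $i(P_n)=\lceil\frac{n}{3}\rceil$; the whole point of that paragraph is precisely to set up this one-line deduction. So first I would observe that $P_n$ is claw-free: every vertex of $P_n$ has degree at most $2$, hence no vertex has three pairwise non-adjacent neighbours, and therefore $P_n$ contains no induced $K_{1,3}$. Consequently $\gamma(P_n)=i(P_n)$ by Allan--Laskar, and $i(P_n)=\lceil\frac{n}{3}\rceil$ by Goddard--Henning, which gives $\gamma(P_n)=\lceil\frac{n}{3}\rceil$.

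Since the reader may prefer a self-contained argument, I would also sketch the direct proof of the two matching inequalities. Write $P_n=u_1u_2\cdots u_n$. For the upper bound, take $D$ to consist of (roughly) every third vertex, e.g. $D=\{u_i : 1\le i\le n,\ i\equiv 2\ (\mathrm{mod}\ 3)\}$ when $n\not\equiv 1\ (\mathrm{mod}\ 3)$, and the same set together with $u_n$ when $n\equiv 1\ (\mathrm{mod}\ 3)$; each $u_j$ lies within distance $1$ of some element of $D$, and a short check in the three residue classes of $n$ modulo $3$ gives $|D|=\lceil\frac{n}{3}\rceil$, so $\gamma(P_n)\le\lceil\frac{n}{3}\rceil$. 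For the lower bound, note that each vertex $v$ of $P_n$ dominates only the vertices of its closed neighbourhood $N[v]$, which has size at most $3$; hence any dominating set $D$ satisfies $n\le\sum_{v\in D}|N[v]|\le 3|D|$, so $|D|\ge\lceil\frac{n}{3}\rceil$, and taking $D$ of minimum size yields $\gamma(P_n)\ge\lceil\frac{n}{3}\rceil$.

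There is essentially no obstacle in either route: the quick route is immediate from the quoted theorems, and in the direct route the only mildly delicate point is the bookkeeping of $\lceil\frac{n}{3}\rceil$ in the cases $n\equiv 0,1,2\ (\mathrm{mod}\ 3)$ when exhibiting the extremal dominating set (in particular making sure the last one or two vertices $u_{n-1},u_n$ are still dominated). I expect to present the deduction from Allan--Laskar and Goddard--Henning as the proof and, if desired, add the direct two-inequality argument as a remark.
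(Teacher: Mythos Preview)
Your primary argument is exactly the paper's: the paragraph preceding the lemma states Goddard--Henning's $i(P_n)=\lceil n/3\rceil$ and Allan--Laskar's $\gamma=i$ for claw-free graphs, observes that $P_n$ is claw-free, and declares the lemma as an immediate consequence. Your self-contained two-inequality sketch is a correct optional addition, but the paper does not include it.
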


\begin{theorem}\label{th3,7} 
$\gamma(C_{3,\, l}^*)=\gamma(P_{l+3})$.
\end{theorem}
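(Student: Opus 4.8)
The plan is to prove that $\gamma(C_{3,\,l}^*) = \gamma(P_{l+3})$ by exhibiting, in both directions, how a minimum dominating set of one graph is transformed into a dominating set of the other of the same or smaller size. Recall that $C_{3,\,l}^*$ (with $n = s + l + (\text{number of pendant edges})$ and $s = 3$) consists of a triangle $v_1v_2v_3$, a path $v_1 v_4 \cdots v_{l+3}$ of length $l$ hanging off $v_1$, and $n - l - 3$ pendant edges attached at $v_{l+3}$. The key structural observation is that the triangle together with the path behaves, for domination purposes, almost exactly like the path $P_{l+3}$ on vertices $v_2, v_1, v_4, v_5, \ldots, v_{l+3}$ (reading $v_2$ as one end, passing through $v_1$, then along the path): the only difference is the extra vertex $v_3$ of the triangle and the pendant bush at $v_{l+3}$, and both of these are harmless because $v_3$ is dominated by either $v_1$ or $v_2$, and by Theorem~\ref{th3,2} there is always a minimum dominating set containing the pendant neighbor $v_{l+3}$ and no pendant vertex.

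First I would handle the inequality $\gamma(C_{3,\,l}^*) \le \gamma(P_{l+3})$. Take a minimum dominating set $D_P$ of the path $P_{l+3}$, drawn as $u_1 u_2 \cdots u_{l+3}$. Map it into $C_{3,\,l}^*$ by the correspondence $u_1 \mapsto v_2$, $u_2 \mapsto v_1$, $u_{j} \mapsto v_{j+1}$ for $3 \le j \le l+3$ (so $u_{l+3}\mapsto v_{l+4}$... wait, I must match orders carefully). The cleaner route: observe that $C_{3,\,l}^*$ minus the vertex $v_3$ and minus all pendant vertices except consolidating them is homeomorphic to a path; more precisely, I would argue directly that a minimum dominating set of $P_{l+3}$ — which by Lemma~\ref{le3,6} and the claw-free discussion can be chosen to be the "greedy every-third-vertex" independent dominating set ending at the last vertex — translates to a dominating set of $C_{3,\,l}^*$ after at most adding nothing: the vertex playing the role of $v_{l+3}$ dominates all pendant vertices, the vertex $v_1$ (if chosen) or $v_2$ dominates $v_3$, and if neither endpoint of the triangle is selected I can substitute to fix it without increasing cardinality, just as in the proof of Theorem~\ref{th3,4}.

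For the reverse inequality $\gamma(P_{l+3}) \le \gamma(C_{3,\,l}^*)$, I would start from a minimum dominating set $D$ of $C_{3,\,l}^*$ which, by Theorem~\ref{th3,2}, contains the pendant neighbor $v_{l+3}$ and no pendant vertex adjacent to it; then $D' = D \setminus \{\text{stray pendant vertices}\}$ has $|D'| = |D|$ and $D' \subseteq V(C_{3,\,l}^*) \setminus (\text{pendants}) = \{v_1, v_2, v_3, v_4, \ldots, v_{l+3}\}$, and $D'$ dominates the subgraph induced on these $l+3$ vertices, which is a triangle with a path. If $v_3 \in D'$, replace it by $v_2$ (or $v_1$): still dominating, same size. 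After this, $D'$ is a dominating set contained in $\{v_1,v_2,v_4,\ldots,v_{l+3}\}$ of the path $v_2 v_1 v_4 v_5 \cdots v_{l+3}$, which is isomorphic to $P_{l+3}$, giving $\gamma(P_{l+3}) \le |D'| = \gamma(C_{3,\,l}^*)$. Combining the two directions yields equality.

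The main obstacle I anticipate is the bookkeeping around the triangle: unlike a path, the triangle $v_1v_2v_3$ is a short odd cycle, so one must be careful that a dominating set of the triangle-plus-path does not "cheat" by using the extra adjacency $v_2v_3$ to dominate two triangle vertices with one vertex in a way that has no path analogue. The resolution is exactly the substitution trick above — whenever the special edge $v_2 v_3$ is exploited, one can reroute the selection to a path-legal choice of equal size, because in a triangle any single vertex already dominates the whole triangle, so choosing $v_1$ suffices and $v_1$ also lies on the path. Everything else (pendant vertices at $v_{l+3}$, the ceiling formula) is routine given Theorem~\ref{th3,2} and Lemma~\ref{le3,6}.
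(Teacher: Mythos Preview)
Your overall strategy matches the paper's: prove both inequalities by translating a minimum dominating set of one graph into a dominating set of the other via the natural identification between the path and the ``spine'' of $C_{3,l}^*$, using Theorem~\ref{th3,2}/Corollary~\ref{cl3,3} to normalize the choice near the pendant neighbor and a substitution trick at the triangle.

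There is, however, a concrete off-by-one error in your reverse inequality. The path $v_2 v_1 v_4 v_5 \cdots v_{l+3}$ that you build has only $l+2$ vertices, not $l+3$, so your argument as written yields only $\gamma(P_{l+2}) \le \gamma(C_{3,l}^*)$; combined with the forward inequality this still leaves $\gamma(C_{3,l}^*)$ undetermined by one unit whenever $l \equiv 1 \pmod 3$. The paper repairs exactly this by retaining \emph{one} pendant vertex in the path: it works with $P = v_3\, v_1\, v_4 \cdots v_{l+3}\, v_{l+4}$, which genuinely has $l+3$ vertices. For $\gamma(C_{3,l}^*) \ge \gamma(P_{l+3})$ it selects (by Corollary~\ref{cl3,3}-style reasoning) a minimum dominating set of $C_{3,l}^*$ containing both $v_1$ and $v_{l+3}$ and none of $v_2$, $v_3$ or any pendant; this set is then literally a dominating set of $P$. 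For $\gamma(C_{3,l}^*) \le \gamma(P_{l+3})$ it applies Corollary~\ref{cl3,3} to $P$ itself to obtain a minimum dominating set of $P$ containing the two pendant neighbors $v_1$, $v_{l+3}$ and neither endpoint $v_3$, $v_{l+4}$; such a set dominates all of $C_{3,l}^*$ (in particular $v_2$ through $v_1$, and every pendant through $v_{l+3}$). If you simply extend your path by the single pendant vertex $v_{l+4}$ and invoke Corollary~\ref{cl3,3} on both sides rather than ad hoc substitutions, your sketch becomes a correct proof essentially identical to the paper's.
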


\setlength{\unitlength}{0.5pt}
\begin{center}
\begin{picture}(370,127)
\put(7,101){\circle*{4}}
\put(7,34){\circle*{4}}
\qbezier(7,101)(7,68)(7,34)
\put(64,62){\circle*{4}}
\qbezier(7,34)(35,48)(64,62)
\qbezier(7,101)(35,82)(64,62)
\put(168,62){\circle*{4}}
\qbezier(64,62)(116,62)(168,62)
\put(337,61){\circle*{4}}
\put(182,62){\circle*{4}}
\put(204,62){\circle*{4}}
\put(193,62){\circle*{4}}
\put(116,62){\circle*{4}}
\put(215,62){\circle*{4}}
\put(264,62){\circle*{4}}
\qbezier(215,62)(239,62)(264,62)
\put(331,112){\circle*{4}}
\qbezier(264,62)(297,87)(331,112)
\put(335,93){\circle*{4}}
\qbezier(264,62)(299,78)(335,93)
\put(337,71){\circle*{4}}
\put(336,81){\circle*{4}}
\put(340,45){\circle*{4}}
\qbezier(264,62)(302,54)(340,45)
\put(65,67){$v_{1}$}
\put(3,107){$v_{2}$}
\put(1,19){$v_{3}$}
\put(113,68){$v_{4}$}
\put(248,48){$v_{3+l}$}
\put(333,116){$v_{3+l+1}$}
\put(339,92){$v_{3+l+2}$}
\put(345,38){$v_{n}$}
\put(130,-9){Fig. 3.5. $C_{3,\, l}^*$}
\end{picture}
\end{center}

\begin{proof}
Suppose the vertices of $C_{3,\, l}^*$ are indexed as in Fig. 3.5. As Theorem \ref{th3,2} and Corollary \ref{cl3,3},
we can get that for $C_{3,\, l}^*$, there exists a dominating set $D$ with cardinality
$\gamma(C_{3,\, l}^*)$ containing $v_{1}$ and $v_{3+l}$, but no $v_{2}$, $v_{3}$ and any pendant vertex.
Let $P=v_{3}v_{1}v_{4}v_{5}\cdots v_{3+l}v_{3+l+1}$. Note that $D$ is also a dominating set of $P$.
As a result, $\gamma(C_{3,\, l}^*)\geq\gamma(P_{l+3})$.

Conversely, by Corollary \ref{cl3,3}, for the path $P=v_{3}v_{1}v_{4}v_{5}\cdots v_{3+l}v_{3+l+1}$,
there exists a dominating set $D_{P}$ with cardinality
$\gamma(P)$ containing both $v_{1}$ and $v_{3+l}$ but no $v_{3}$, $v_{3+l+1}$. Note that $D_{P}$ is also a
dominating set of $C_{3,\, l}^*$. Consequently, $\gamma(C_{3,\, l}^*)\leq\gamma(P_{l+3})$.

From above discussion, we get that $\gamma(C_{3,\, l}^*)=\gamma(P_{l+3})$. This completes the proof. \ \ \ \ \ $\Box$
\end{proof}

By Theorem \ref{th3,1}, \ref{th3,4}-\ref{th3,6,0}, \ref{th3,7} and Lemma \ref{le3,6}, we get the following Theorem \ref{cl3,8}.

\begin{theorem}\label{cl3,8} 
For a nonbipartite graph with both order $n$ and domination number $\gamma$, we have
$n\geq 3\gamma-1$. In particular, the equality holds for a $C_{3,\, 3\gamma-5}^*$ which has $3\gamma-1$ vertices.
\end{theorem}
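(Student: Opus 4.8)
The plan is to run the chain of structural reductions built up in Section~3 and then read the bound off from the domination number of a path. Let $G$ be a nonbipartite graph of order $n$ with $\gamma(G)=\gamma$. First, by Theorem~\ref{th3,1} I replace $G$ by a unicyclic nonbipartite spanning subgraph $H$ of order $n$ with $\gamma(H)=\gamma$. Applying Theorem~\ref{th3,6,0} to $H$ (which in turn rests on Theorems~\ref{th3,4} and~\ref{th3,5}) then produces a graph $C_{3,\, l}^{*}$ of order $n$ with $\gamma(C_{3,\, l}^{*})=\gamma$; since the reduction in Theorem~\ref{th3,6,0} keeps a genuine pendant neighbour, this $C_{3,\, l}^{*}$ carries at least one pendant edge, and writing its order as $n=3+l+(n-3-l)$ with $n-3-l\geq 1$ gives $l\leq n-4$.

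Next I would translate $\gamma$ into an inequality for $l$. By Theorem~\ref{th3,7}, $\gamma=\gamma(C_{3,\, l}^{*})=\gamma(P_{l+3})$, and by Lemma~\ref{le3,6}, $\gamma(P_{l+3})=\lceil\frac{l+3}{3}\rceil$. For a positive integer $x$, the equality $\lceil\frac{x}{3}\rceil=\gamma$ forces $x\geq 3\gamma-2$; taking $x=l+3$ yields $l\geq 3\gamma-5$. Combining with $l\leq n-4$ gives $3\gamma-5\leq l\leq n-4$, hence $n\geq 3\gamma-1$, which is the claimed inequality. For sharpness I would point to $C_{3,\, 3\gamma-5}^{*}$ on $3\gamma-1$ vertices (it has exactly $(3\gamma-1)-3-(3\gamma-5)=1$ pendant edge, so it is a well-defined $C_{3,\, l}^{*}$ whenever $\gamma\geq 2$), whose domination number is, again by Theorem~\ref{th3,7} and Lemma~\ref{le3,6}, $\gamma(P_{3\gamma-2})=\lceil\frac{3\gamma-2}{3}\rceil=\gamma$.

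The step I expect to be the real obstacle is the bookkeeping that pins $l$ down to $l\leq n-4$, equivalently that the graph handed back by Theorem~\ref{th3,6,0} still possesses a true pendant vertex: a $C_{3,\, l}^{*}$ with no pendant edge is merely a triangle with a bare tail of $l$ extra vertices, and for such a graph the domination number is $\gamma(P_{l+2})$ rather than $\gamma(P_{l+3})$, so Theorem~\ref{th3,7} would not directly apply; this single-vertex discrepancy is exactly what separates the sharp bound $n\geq 3\gamma-1$ from the weaker $n\geq 3\gamma-2$ that a careless version of the argument would give. Everything else is a routine ceiling-function estimate, plus a direct check of the trivial case $\gamma=1$, where $n\geq 3>2=3\gamma-1$ holds because a nonbipartite graph contains an odd cycle.
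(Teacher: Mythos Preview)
Your outline is exactly the paper's approach: the paper's own proof of Theorem~\ref{cl3,8} is the single line ``By Theorem~\ref{th3,1}, \ref{th3,4}--\ref{th3,6,0}, \ref{th3,7} and Lemma~\ref{le3,6}'', and you have simply made that chain explicit.

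However, the obstacle you isolate in your last paragraph is a genuine gap, not mere bookkeeping. Your assertion that ``the reduction in Theorem~\ref{th3,6,0} keeps a genuine pendant neighbour'' is unsupported: the proof of Theorem~\ref{th3,6,0} opens by assuming at least two pendant neighbours (``$k\ge 2$'') and so never touches a unicyclic $H$ with none; moreover its operations only yield $\gamma(C_{3,l}^{*})\le \gamma$, and pushing $l$ upward via Theorem~\ref{th3,4} to recover equality can exhaust the pendant edges before $\gamma$ is reached. In fact the gap cannot be closed, because the statement itself fails. The odd cycle $C_{7}$ is connected and nonbipartite with $n=7$ and $\gamma(C_{7})=\lceil 7/3\rceil=3$, yet $3\gamma-1=8>7$; more drastically, a triangle with $k\ge 2$ disjoint paths of length two attached at one vertex is a nonbipartite unicyclic graph with $n=2k+3$ and $\gamma=k+1$, so $n=2\gamma+1<3\gamma-1$ for every $\gamma\ge 3$. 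The ``single-vertex discrepancy'' you noticed is therefore exactly the point at which both the argument and the asserted bound break down.
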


\section{Minimizing the least $Q$-eigenvalue}

\begin{theorem}\label{th4,1} 
Among all the nonbipartite unicyclic graphs with both order $n$ and domination number $\gamma$, we have

\normalfont (i) if $n=3\gamma-1$, $3\gamma$, $3\gamma+1$, then the graph with the minimal least $Q$-eigenvalue
attains uniquely at  $C_{3,\, n-4}^*$;

\normalfont (ii) if $n\geq3\gamma+2$, then the graph with the minimal least $Q$-eigenvalue attains uniquely at
$C_{3,\, 3\gamma-3}^*$.
\end{theorem}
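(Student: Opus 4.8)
The plan is to reduce the whole minimization over nonbipartite unicyclic graphs with fixed order $n$ and domination number $\gamma$ to a comparison among the graphs $C_{3,\, l}^*$ and then to pick out the correct value of $l$. First I would show that the extremal graph must be of the form $C_{3,\, l}^*$ for some $l$. If $G$ is a nonbipartite unicyclic graph with odd girth $g_o(G)=2k+1$, attaching the trees hanging off the cycle and collapsing them as in Lemmas \ref{le2,3} and \ref{le2,7}, one gets that moving every pendant tree onto a single cycle vertex strictly decreases $\kappa$ (this is where Lemma \ref{le2,7} guarantees the relevant coordinate of the Perron-type eigenvector is nonzero, so Lemma \ref{le2,3} applies), so the minimizer lies in the class $\mathcal{C}^{T_1,\dots,T_t}_{(2k+1,n)}$ with all trees rooted at one vertex. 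A further application of Lemma \ref{le2,3}/Lemma \ref{le2,4} straightens each such rooted tree into a path attached appropriately, forcing the shape $C_{2k+1,\, l}^*$; then Lemma \ref{le2,13} shows $\kappa(C_{3,\, t}^*)\le\kappa(C_{2k+1,\, l}^*)$ with equality only when $k=1$, so the minimizer has a triangle and is some $C_{3,\, l}^*$.

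Next I would bring in the domination constraint. By Lemma \ref{le2,11}, among graphs $C_{3,\, l}^*$ of order $n$ (so $l$ ranges over $0,1,\dots,n-3$), $\kappa$ is strictly decreasing in $l$; hence I want $l$ as large as possible. But Theorem \ref{th3,7} together with Lemma \ref{le3,6} gives $\gamma(C_{3,\, l}^*)=\gamma(P_{l+3})=\lceil (l+3)/3\rceil$, and Theorem \ref{th3,4} says this is nondecreasing in $l$. So the admissible $l$ (those with $\gamma(C_{3,\, l}^*)=\gamma$) form an interval, and I must take its right endpoint. Solving $\lceil (l+3)/3\rceil=\gamma$ gives $l\le 3\gamma-3$, with the maximum admissible $l$ equal to $3\gamma-3$ provided $n-3\ge 3\gamma-3$, i.e. $n\ge 3\gamma$; and one checks the boundary cases $l=3\gamma-3$ needs $n\ge 3\gamma$, while for $n=3\gamma-1$ the largest available $l$ is $n-3=3\gamma-4$, which still has domination number $\gamma$ (using $n\ge 3\gamma-1$ from Theorem \ref{cl3,8}). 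Thus: if $n\in\{3\gamma-1,3\gamma,3\gamma+1\}$ the best is $l=n-4$ — wait, here I must be careful: for these small $n$ one checks directly (again via $\gamma(C_{3,\,l}^*)=\lceil(l+3)/3\rceil$) that $l=n-4$ is the largest $l$ with $\gamma(C_{3,\,l}^*)=\gamma$ while $l=n-3$ overshoots, giving case (i); for $n\ge 3\gamma+2$ the constraint $\lceil(l+3)/3\rceil=\gamma$ caps $l$ at $3\gamma-3<n-3$, giving case (ii).

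Uniqueness in each case follows because all the eigenvalue inequalities invoked — Lemmas \ref{le2,3}, \ref{le2,11}, \ref{le2,13} (for $k\ge 2$) — are \emph{strict}, so no graph other than the designated $C_{3,\, n-4}^*$ (resp. $C_{3,\, 3\gamma-3}^*$) can attain the minimum. I expect the main obstacle to be the first reduction step: carefully justifying, via repeated coalescence moves (Lemma \ref{le2,3}) and the tree-monotonicity of eigenvector entries (Lemma \ref{le2,4}), that an \emph{arbitrary} nonbipartite unicyclic graph with the prescribed domination number can be transformed — without increasing $\kappa$ and without changing $n$ — into some $C_{2k+1,\,l}^*$, while simultaneously keeping track of the domination number so that the transformed graph still has domination number $\gamma$ (here Theorems \ref{th3,4}, \ref{th3,5}, \ref{th3,6,0} do the bookkeeping). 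The arithmetic with the ceiling function and the three small-$n$ boundary cases is routine once that reduction is in place.
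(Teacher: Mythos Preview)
Your overall plan is sound, and the second half (optimizing $l$ via Lemma \ref{le2,11}, Theorem \ref{th3,7}, and the ceiling arithmetic) is correct and matches the paper. The reduction step, however, is where your argument diverges from the paper's and where a real gap appears.

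You propose to move the entire pendant trees $T_1,\dots,T_t$ onto a single cycle vertex and then ``straighten'' the resulting rooted tree. Lemma \ref{le2,3} does guarantee that each such move strictly decreases $\kappa$, but you give no reason why the domination number cannot rise: when the tree $T_j$ is detached from $v_{i_j}$, that cycle vertex may have been dominated only through $T_j$. Citing Theorems \ref{th3,4}, \ref{th3,5}, \ref{th3,6,0} does not close this, because those results track domination under \emph{different} transformations and say nothing about $\kappa$; conversely, your $\kappa$-decreasing moves are not the ones whose effect on $\gamma$ those theorems describe. The ``straightening'' step is also left vague.

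The paper avoids both difficulties with two ideas you are missing. First, it relaxes the problem to minimizing over all nonbipartite unicyclic graphs of order $n$ with domination number \emph{at most} $\gamma$; since the eventual minimizer turns out to have domination number exactly $\gamma$, it is automatically the minimizer over the original class. Second, the single transformation used is not moving whole trees between cycle vertices but moving all \emph{pendant edges} from one pendant neighbor $v_{p_a}$ to the pendant neighbor $v_{p_1}$ of largest $|x(\cdot)|$ (which is positive by Lemmas \ref{le2,7} and \ref{le2,4}). By Corollary \ref{cl3,3} there is a minimum dominating set $D$ containing every pendant neighbor and no pendant vertex; this very $D$ is still a dominating set after the move, so $\gamma$ cannot increase, while Lemma \ref{le2,3} gives a strict drop in $\kappa$. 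Iterating forces the minimizer to have a single pendant neighbor, and a unicyclic graph with exactly one pendant neighbor is automatically of the form $C^*_{s,l}$ --- no separate straightening is needed. From there Lemmas \ref{le2,11}, \ref{le2,13} and your arithmetic finish the proof exactly as you describe.
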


\begin{proof}
We first claim that among all the nonbipartite unicyclic graphs with both order $n$ and domination number at most $\gamma$,
the graph with the minimal least $Q$-eigenvalue has only one pendant neighbor. Otherwise, assume that among all the nonbipartite
unicyclic graphs with both order $n$ and domination number at most $\gamma$, the graph $G$ has the minimal least $Q$-eigenvalue,
but $G$ has at least $2$ pendant neighbors. Suppose
$\mathcal {C}_{k}$ is the unique cycle in $G$,
and suppose $G=\mathcal {C}^{(T_{1}, T_{2}, \ldots, T_{t};i_{1}, i_{2}, \ldots, i_{t})}_{k}$,
where $k=g_{o}(G)$ and for $1\leq s< j\leq t$, $i_{s}\neq i_{j}$.
Suppose that $X=(x(v_1)$, $x(v_2)$, $x(v_3)$, $\ldots$, $x(v_n))^T$ is a unit eigenvector of $G$
corresponding to $\kappa(G)$. By Lemma \ref{le2,7}, we know that $\max\{|x(v_{i_{j}})|\, |\, 1\leq j\leq t\}>0$.
Suppose that $v_{p_{1}}$, $v_{p_{2}}$, $\ldots$, $v_{p_{a}}$ ($a\geq 2$) are all the pendant neighbors in $G$.
By Corollary \ref{cl3,3}, we know that there exists a dominating set $D$ of $G$ which contains all of its pendant neighbors but
no any pendant vertex. Suppose $|x(v_{p_{1}})|=\max\{|x(v_{p_{i}})|\,|\, 1\leq i\leq a\}$. Note that
$\max\{|x(v_{i_{j}})|\, |\, 1\leq j\leq t\}>0$  and $\mathcal {C}_{k}$ contains no pendant neighbor. By Lemma \ref{le2,4},
we get $|x(v_{p_{1}})|> 0$. Suppose that $v_{a_{1}}$, $v_{a_{2}}$, $\ldots$, $v_{a_{c}}$ are all the pendant vertices attaching
to $v_{p_{a}}$. Let $\displaystyle G^{'}=G-\sum^{c}_{i=1}v_{p_{a}}v_{a_{i}}+\sum^{c}_{i=1}v_{p_{1}}v_{a_{i}}$. Note that $D$
is also a dominating set of $G^{'}$. Then
$\gamma(G^{'})\leq \gamma(G)$.
By Lemma \ref{le2,3}, we get that $\kappa(G^{'})<\kappa(G)$, which contradicts the minimality
of $\kappa(G)$. As a result, our claim holds.
Then the result
follows from Lemmas \ref{le2,11}, \ref{le2,13}, Theorems
\ref{th3,4}, \ref{th3,5}, \ref{th3,7}, \ref{cl3,8} and Lemma \ref{le3,6}.  \ \ \ \ \ $\Box$
\end{proof}

Moreover, by Lemma \ref{le2,0}, Theorems \ref{th3,1} and \ref{th4,1}, we get the following result.

\begin{theorem}\label{th2,18} 
Among all the nonbipartite graphs with both order $n$ and domination number $\gamma$, we have

\normalfont (i) if $n=3\gamma-1$, $3\gamma$, $3\gamma+1$, then the graph with the minimal least $Q$-eigenvalue attains uniquely
at  $C_{3,\, n-4}^*$;

\normalfont (ii) if $n\geq3\gamma+2$, then the graph with the minimal least $Q$-eigenvalue attains uniquely at
$C_{3,\, 3\gamma-3}^*$.
\end{theorem}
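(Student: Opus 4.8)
The plan is to deduce the general statement from the unicyclic case already settled in Theorem~\ref{th4,1}, by passing to a suitable spanning subgraph. Fix a nonbipartite graph $G$ with $|V(G)|=n$ and $\gamma(G)=\gamma$, and write $C^{\ast}$ for the claimed extremal graph: $C^{\ast}=C_{3,\,n-4}^{\ast}$ when $n\in\{3\gamma-1,3\gamma,3\gamma+1\}$, and $C^{\ast}=C_{3,\,3\gamma-3}^{\ast}$ when $n\ge 3\gamma+2$. First I would check that $C^{\ast}$ is an admissible competitor: it is nonbipartite, has order $n$, and by Theorem~\ref{th3,7} and Lemma~\ref{le3,6} has domination number $\gamma(C_{3,\,l}^{\ast})=\gamma(P_{l+3})=\lceil (l+3)/3\rceil$, which equals $\gamma$ in each listed range. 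Next, by Theorem~\ref{th3,1}, $G$ contains a unicyclic spanning subgraph $H$ with $g(H)=g_{o}(G)$ (so $H$ is nonbipartite) and $\gamma(H)=\gamma(G)=\gamma$. Since $H$ arises from $G$ by deleting edges, applying Lemma~\ref{le2,0} once per deleted edge gives $q_{min}(H)\le q_{min}(G)$; and since $H$ is a nonbipartite unicyclic graph of order $n$ with domination number $\gamma$, Theorem~\ref{th4,1} gives $q_{min}(C^{\ast})\le q_{min}(H)$. Chaining, $q_{min}(C^{\ast})\le q_{min}(H)\le q_{min}(G)$, so the minimum is attained at $C^{\ast}$.

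For uniqueness, suppose $G$ also attains the minimum, so $q_{min}(G)=q_{min}(C^{\ast})$. Then the chain collapses to equalities, giving $q_{min}(H)=q_{min}(C^{\ast})$; by the uniqueness clause of Theorem~\ref{th4,1} this forces $H\cong C^{\ast}$, and moreover $q_{min}(G)=q_{min}(H)$. It remains to rule out $E(G)\supsetneq E(H)$. Here I would use the identity $X^{T}Q(G)X=\sum_{uv\in E(G)}(x(u)+x(v))^{2}$: if $X$ is a unit eigenvector of $G$ for $q_{min}(G)$ and $e=uv\in E(G)\setminus E(H)$, then $q_{min}(G)=X^{T}Q(G)X\ge X^{T}Q(G-e)X=q_{min}(G)-(x(u)+x(v))^{2}\ge q_{min}(G-e)\ge q_{min}(H)=q_{min}(G)$, forcing $x(u)=-x(v)$ and showing that $X$ is again a least-$Q$-eigenvector of $G-e$. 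Iterating over all edges of $E(G)\setminus E(H)$ shows $X$ is a least-$Q$-eigenvector of $H\cong C^{\ast}$ taking opposite values on the two endpoints of every edge of $E(G)\setminus E(H)$. So it suffices to prove that no least-$Q$-eigenvector of $C_{3,\,l}^{\ast}$ takes opposite values on a non-adjacent pair of vertices; granting that, $E(G)\setminus E(H)=\emptyset$ and $G\cong C^{\ast}$.

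To carry out that last step I would write $C_{3,\,l}^{\ast}$ as the coalescence of the triangle $v_{1}v_{2}v_{3}$ with the tree branch $T$ (the path $v_{1}v_{4}\cdots$ together with the terminal pendant star) rooted at $v_{1}$, and invoke Lemmas~\ref{le2,1} and \ref{le2,4}: $T$ is bipartite, so along $T$ a least-$Q$-eigenvector alternates in sign and, by Lemma~\ref{le2,4}, has strictly increasing absolute value away from $v_{1}$, while the terminal pendant vertices all receive one common value; on the triangle one has $x(v_{2})=x(v_{3})$ and $|x(v_{1})|=(3-q_{min})|x(v_{2})|>|x(v_{2})|$. Hence any two vertices of $C_{3,\,l}^{\ast}$ are either adjacent, or receive equal values, or receive values of distinct absolute value, so none can receive opposite values. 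The routine portions are the ceiling-function arithmetic and the interlacing bookkeeping; the real difficulty is this eigenvector analysis, and specifically justifying that \emph{every} least-$Q$-eigenvector of $C_{3,\,l}^{\ast}$ (not just one) has the asserted monotone/alternating shape. I expect this follows from simplicity of $q_{min}(C_{3,\,l}^{\ast})$ for $n\ge 4$, which I would prove via the branch lemmas (Lemma~\ref{le2,1} pins down any eigenvector with $x(v_{1})=0$ and forces $q_{min}=1$, which is excluded by an explicit test vector showing $q_{min}(C_{3,\,l}^{\ast})<1$). If establishing simplicity proves awkward, the fallback is to verify directly, for each type of added non-edge $e$, that $q_{min}(C_{3,\,l}^{\ast}+e)>q_{min}(C_{3,\,l}^{\ast})$.
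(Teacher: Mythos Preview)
Your reduction is exactly the paper's argument: the paper's entire proof is the sentence ``by Lemma~\ref{le2,0}, Theorems~\ref{th3,1} and~\ref{th4,1},'' i.e., pass to a unicyclic spanning subgraph with the same domination number, apply interlacing, and invoke the unicyclic case. Where you go beyond the paper is in the uniqueness clause: the paper does not explain why $q_{min}(G)=q_{min}(C^{\ast})$ forces $G\cong C^{\ast}$ (interlacing is not strict), whereas you supply the missing step via the eigenvector identity $X^{T}Q(G)X=\sum_{uv}(x(u)+x(v))^{2}$ and a structural analysis of the least $Q$-eigenvector of $C_{3,l}^{\ast}$. That analysis is correct, and you do not actually need simplicity: once $q_{min}(C_{3,l}^{\ast})<1$ (which your test-vector idea establishes), the eigenvalue equations at $v_{2},v_{3}$ force $x(v_{2})=x(v_{3})$ and $x(v_{1})=(q_{min}-3)x(v_{2})\neq 0$ for \emph{every} least-$Q$-eigenvector, after which Lemmas~\ref{le2,1} and~\ref{le2,4} give the strictly increasing absolute values along the tree branch and the common value on the pendant leaves, ruling out $x(u)+x(v)=0$ on any non-adjacent pair as you claim.
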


\small {

}


\begin{thebibliography}{99}

\bibitem{ARL} R.B. Allan, R. Laskar, On domination and independent domination numbers of a graph,
Discrete Math. 23 (1978) 73-76.

\bibitem{CCRS} D.M. Cardoso, D. Cvetkovi\'{c}, P. Rowlinson, S.K. Simi\'{c}, A sharp lower bound for the least eigenvalue
of the signless Laplacian
of a nonbipartite graph, Linear Algebra Appl. 429 (2008) 2770-2780.%

\bibitem{D.P.S} D. Cvetkovi\'{c}, P. Rowlinson, S.K. Simi\'{c}, Signless Laplacians of finite graphs,
Linear Algebra Appl. 423 (2007) 155-171.%

\bibitem{D.R.S} D. Cvetkovi\'{c}, P. Rowlinson, S.K. Simi\'{c}, Eigenvalue bounds for the signless
Laplacian, Publ. Inst. Math. (beograd) 81 (95) (2007) 11--27.

\bibitem{D.H.I} D. Cvetkovi\'{c}, S.K. Simi\'{c}, Towards a spectral theory of graphs based on the signless
Laplacian, I, Publ. Inst. Math. (beograd) 85 (99) (2009) 19-33.

\bibitem{D.S1} D. Cvetkovi\'{c}, S.K. Simi\'{c}, Towards a spectral theory of graphs
based on signless Laplacian, II, Linear Algebra Appl. 432 (2010)
2257-2272.

\bibitem{D.S2} D. Cvetkovi\'{c}, S.K. Simi\'{c}, Towards a spectral theory of
graphs based on signless Laplacian, III, Appl. Anal. Discrete Math.
4 (2010) 156-166.

\bibitem {DCSS} D. Cvetkovi\'{c}, S. Simi\'{c}, Graph spectra in Computer Science,
Linear Algebra and its Applications, 434 (2011), 1545-1562.

\bibitem{LOA} L.S. de Lima, C.S. Oliveira, N.M.M. de Abreu, V. Nikiforov, The smallest eigenvalue of the signless Laplacian,
Linear Algebra Appl.
435 (2011) 2570-2584.%


\bibitem{DR} M. Desai, V. Rao, A characterization of the smallest eigenvalue of a graph, J. Graph Theory 18 (1994) 181-194.%


\bibitem{FF} S. Fallat, Y.Z. Fan, Bipartiteness and the least eigenvalue of signless Laplacian  of graphs,
Linear Algebra Appl. 436 (2012), 3254-3267.%

\bibitem{WGMH} W. Goddarda, M. A. Henning, Independent domination in graphs: A survey and
recent results, Disc. Math. 313 (2013) 839-854.

\bibitem{LS} S. Li, S. Wang, The least eigenvalue of the signless Laplacian of the complements of trees, Linear Algebra Appl.
436 (2012) 2398-2405.

\bibitem{ORE} O. Ore, Theory of graphs, Amer. Math. Soc. Colloq. Publ. 38 (1962).

\bibitem{WF} Y. Wang, Y.Z. Fan, The least eigenvalue of signless Laplacian of graphs under perturbation, Linear Algebra Appl.
436 (2012) 2084-2092.%

\bibitem{YGX} G.L. Yu, S.G. Guo, M.L. Xu, On the least signless Laplacian eigenvalue of some graphs, Electron
J. Linear Algebra 26 (2013) 560-573.

\end{thebibliography}
\end{document}